
\documentclass[12pt,reqno]{amsart}
\usepackage{amsfonts}
\usepackage{amssymb}
\usepackage{amsmath}
\usepackage{latexsym}
\usepackage{amsthm}
\usepackage{epsfig}
\usepackage{color}
\usepackage{amsfonts,amssymb,mathrsfs,amscd}
\usepackage{comment}

\usepackage{amssymb,amsmath}
\usepackage{color}


\def\Bbb{\mathbb}

\def\Dt{\partial_t}
\def\eb{\varepsilon}

\def\R {\mathbb{R}}

\def\<{\left<}
\def\>{\right>}

\def\Nx{\nabla_x}
\def\Dx{\Delta_x}
\def\divv{\operatorname{div}}
\def\({\left(}
\def\){\right)}

\def\Bbb{\mathbb}

\newtheorem{proposition}{Proposition}[section]
\newtheorem{theorem}[proposition]{Theorem}
\newtheorem{corollary}[proposition]{Corollary}
\newtheorem{lemma}[proposition]{Lemma}

\theoremstyle{definition}
\newtheorem{definition}[proposition]{Definition}

\newtheorem{remark}[proposition]{Remark}

\numberwithin{equation}{section}


\def \no#1#2#3 {{\bf #1} (#3), #2.}
\def \eds#1#2#3 {#1, #2, #3.}

\title[Hyperbolic Navier-Stokes equations] {Hyperbolic relaxation of the 2D Navier-Stokes equations in a bounded domain }
\author[A. Ilyin, Yu. Rykov and S. Zelik]{ Alexei Ilyin${}^2$,  Yuri Rykov${}^2$ and Sergey Zelik${}^{1,2}$}
\address{${}^1$
University of Surrey, Department of Mathematics,
Guildford, GU2 7XH, United Kingdom,  s.zelik@surrey.ac.uk.}
\address{${}^2$ Keldysh Institute of Applied Mathematics,
Miusskaya sq. 4, 125047 Moscow, Russia}
\subjclass[2000]{35B40, 35B45}
\keywords{Navier-Stokes equations, hyperbolic relaxations, singular perturbations, attractors}
\thanks{ 
A.I. and Yu. G. acknowledge financial support from the Russian Science Foundation (grant
no. 14-21-00025) and S.Z.'s research is supported by the Russian Science Foundation (grant
no. 14-41-00044) and the  RFBR grant 15-01-03587.
The authors would also like to thank Varga Kalantarov for many stimulating discussions}

\begin{document}
\begin{abstract} A hyperbolic relaxation of the classical Navier-Stokes problem in 2D bounded domain with Dirichlet boundary conditions is considered. It is proved that this relaxed problem possesses a global strong solution if the relaxation parameter is small and the appropriate norm of the initial data is not very large. Moreover, the dissipativity of such solutions is established and the singular limit as the relaxation parameter tends to zero is studied.
\end{abstract}
\maketitle
\tableofcontents
\def\R{\Bbb R}
\def\Dt{\partial_t}
\def\eb{\varepsilon}

\section{Introduction}\label{s1}
Various versions of hyperbolic  Navier-Stokes equations are of increasing current interest. For instance, such equations may appear from the  relaxation approximations of the Euler equations in the diffusive scaling limit:
\begin{equation}
\begin{cases}
\Dt u+\divv(U)+\Nx p=0, \ \ \ \divv u=g\\
\eb\Dt U+\nabla u+U=u\otimes u,
\end{cases}
\end{equation}
where $\eb>0$ and $U$ is a supplementary matrix valued variable. Excluding this variable, we end up with the following version of hyperbolic Navier-Stokes equations
\begin{equation}\label{0.HNS}
\eb\Dt^2 u+\Dt u+\divv(u\otimes u)+\Nx p=\Dx u+g,
\end{equation}
see \cite{bren,Hach,Raug,Sch} for the details.
\par
Another source of such equations is the theory of viscoelastic fluids. In particular, the equations
\begin{equation}\label{0,veNS}
\eb\Dt^2u+\eb\Dt\divv(u\otimes u)+\Dt u+\divv (u\otimes u)+\Nx p=\Dx u+g
\end{equation}
with the additional term $\eb\Dt\divv(u\otimes u)$ naturally arise in the theory of Jeffrey flows, see \cite{const,GGP,RAI,RAII}, see also references therein.
\par
We also report here on one more  motivation for  the hyperbolic relaxation of Navier-Stokes equations related with the computational aspects. From this point of view, the usefulness of hyperbolization of the Navier-Stokes equations can be described as follows. The usage of the explicit schemes, which can turn out to be the most adequate for flows with complex structure and very convenient for parallel computations, requires the time step  $\tau\sim h^2$ , where  $h$  is  the typical size of the spatial grid. In the hyperbolized version of Navier--Stokes system with small
parameter $\eb$    the time step is $\tau\sim h\sqrt{\eb}$   because of the hyperbolic nature of modified system. If we take $\eb$  of the order of $h$ , then there is a significant gain in the computation time. At the same time many physical systems has limited level of detailing, see, for example, \cite{Chet}. This fact leads to natural limitation for the space discretization in practical problems analogously to the situation with multi-phase models.  The estimates of possible influence of scales on the quality of numerical algorithms and the proximity estimates in the
 linear case can be found, for example, in \cite{Il-Ryk, Mysh-Tish, Rep-Chet}.
\par
The mathematical study of problem \eqref{0.HNS} in the case where $x\in\R^d$, $d=2$ or $3$ as well as for periodic boundary conditions are presented in \cite{bren,Raug}, see also \cite{RAI,RAII}.
\par
The main aim of the present paper is to study problem \eqref{0.HNS} in a bounded domain $\Omega\subset\R^2$ with Dirichlet boundary conditions. Note that, in contrast to the cases mentioned above, we cannot use the vorticity equation as it done in \cite{bren} or Strichartz estimates for wave operators which have been essentially used in \cite{Raug} (to the best of our knowledge nothing is known concerning the validity of Strichartz estimates for hyperbolic Stokes equations in bounded domains). By this reason, the have to work on the level of energy type estimates only and cannot use the approaches developed in \cite{bren} and \cite{Raug} at least in a direct way.
\par
One more principal difficulty related with these equations is that they do not possess any reasonable energy inequality for $\eb\ne0$, so to obtain the global existence of solutions we need to use the energy equality for $\eb=0$ and perturbation arguments. By this reason, our results are restricted to the case of small $\eb$ and cannot be extended to the case of arbitrarily large $\eb$. We expect that this restriction is not technical but is related with the nature of the considered problem. In order to support this point of view, we consider the simplified model of 1D hyperbolic Burgers equation
$$
\eb\Dt^2u+\Dt u+u\partial_x u=\partial^2_xu
$$
and prove that the solutions may blow up in finite time if the initial energy or/and $\eb>0$ is large enough, see Section \ref{s4.1} for the details.
\par
We will study the strong solutions $\xi_u(t):=\{u(t),\Dt u(t)\}$ of problem \eqref{0.HNS} which belong to the phase space
$$
\mathcal E^1_\eb:=\biggl\{\{u,v\}\in H^2(\Omega)\cap H^1_0(\Omega)\times H^1_0(\Omega),\ \divv u=\divv v=0\biggr\}
$$
endowed by the following norm:
$$
\|\xi_u\|^2_{\mathcal E^1_\eb}:=\eb\|\Dt u\|_{H^1}^2+\|\Dt u\|^2_{L^2}+\|u\|^2_{H^2}.
$$
The main result of the paper is a global existence and dissipativity of strong solutions of problem \eqref{0.HNS} if $\eb$ is small enough and the initial data satisfies
$$
\|\xi_u(0)\|_{\mathcal E^1_\eb}\le R(\eb),
$$
where the monotone decreasing function $R$ satisfies $\lim_{\eb\to0}R(\eb)=\infty$. As usual, the dissipativity means that
$$
\|\xi_u(t)\|_{\mathcal E^1_\eb}\le Q(\|\xi_u(t)\|_{\mathcal E^1_\eb})e^{-\alpha t}+Q(\|g\|_{L^2}),
$$
where the monotone function $Q$ and positive constant $\alpha$ are independent of $u$, $t$ and $\eb$, see Theorem \ref{Th1.main}.
\par
The paper is organized as follows.
\par
In Section \ref{s2}, we introduce the necessary spaces and notations which will be used throughout of the paper. Section \ref{s3} is devoted to the proof of the main result stated above.
\par
The singular limit $\eb\to0$ is studied in Section \ref{s.35}. In particular, we give there the results concerning the convergence of individual trajectories on finite time interval as well as the convergence of the corresponding global attractors.
\par
Finally, in Section \ref{s4}, we discuss possible extensions of proved results to the 3D case as well as to Jeffrey flows. In addition, the result concerning blow up in the hyperbolic Burgers equations is given there.

\section{Preliminaries}\label{s2}

In a bounded smooth domain $\Omega\subset\R^2$, we study the following problem:
\begin{equation}\label{1}
\begin{cases}
\eb\Dt^2 u+\Dt u+(u,\Nx)u+\Nx p=\Dx u+g,\\
\divv u=0,\ u\big|_{t=0}=u_0,\ \ \Dt u\big|_{t=0}=u_0',\\
u\big|_{\partial\Omega}=0.
\end{cases}
\end{equation}
Here $u=(u^1,u^2)$ and  $p$ are an unknown velocity vector field and pressure, respectively, and $g\in L^2(\Omega)$ is the given external force and $\eb>0$ is a given parameter which is assumed to be small enough.
\par
 As usual, we introduce the spaces $V$ and $\mathcal H$ as follows:
\begin{equation}\label{1.VH}
\aligned
&V:=\{u\in [H_0^1(\Omega)]^2,\ \ \divv u=0\},\\
&\mathcal H:=\{u\in [L^2(\Omega)]^2,\ \ \divv u=0,\ u\cdot n\big|_{\partial\Omega}=0\}
\endaligned
\end{equation}
and denote by $P:[L^2(\Omega)]^2\to\mathcal H$ the Leray--Helmholtz orthogonal projection onto solenoidal vector fields, see~\cite{TemNS}.
We also denote by $A:=-P\Delta$ the Stokes operator in $\mathcal H$ with Dirichlet boundary conditions. Then $A$ is a positive-definite self-adjoint operator in $\mathcal H$ with compact inverse with domain
$$
D(A)=[H^2(\Omega)\cap H_0^1(\Omega)]^2\cap \{\divv u=0\}.
$$
In addition, $D(A^{1/2})=V$ (with equality of norms), and
 $D(A^{-1/2})=D(A^{1/2})^*=V^*=H^{-1}(\Omega)$ (with equality of norms),
and we also set
$$
(u,v)_s:=(u,A^sv),\quad \text{for}\quad |s|\le1.
$$
\par
We now introduce the  natural energy spaces related with the hyperbolic relaxation \eqref{1} of the Navier-Stokes equations as follows:
\begin{equation}\label{1.ens}
 E^s:=D(A^{(s+1)/2})\times D(A^{s/2}), \ \ s\in\R,
\end{equation}
although we will use below only the cases where $s=-1,0,1$. The norms in these spaces are given by
\begin{equation}\label{2}
\|\xi_u\|_{\mathcal E^s_\eb}^2:=
\eb\|\Dt u\|_{D(A^{s/2})}^2+\|\Dt u\|^2_{D(A^{(s-1)/2})}+\|u\|_{D(A^{(s+1)/2})}^2,
\end{equation}
where
$$
\xi_u:=\{u,\Dt u\}.
$$
Note that these norms are equivalent for different positive values of the parameter $\eb$, but the dependence on $\eb$ is included to the definition of this norms in order to capture the right dependence of solutions on $\eb$ as $\eb\to0$.

The truncated norms
\begin{equation}\label{trunc}
\|\xi_u\|_{ E^s_\eb}^2:=
\eb\|\Dt u\|_{D(A^{s/2})}^2+\|u\|_{D(A^{(s+1)/2})}^2
\end{equation}
in these energy spaces will also  be useful in what follows. Furthermore,
all the technical estimates in Section~\ref{s3} below will be carried
out in terms of norm~\eqref{trunc} up to the last step when the full norm~\eqref{2}
is appended to the final estimate.

Finally, in order to exclude the pressure, we apply the Leray operator $P$ to both sides of equation \eqref{1} and get the equation for the velocity field $u$ only:
\begin{equation}\label{1.HNS}
\eb\Dt^2 u+\Dt u+P((u,\Nx)u)=-Au+g,\ \ \xi_u\big|_{t=0}=\{u_0,u_0'\},\ \ 
\end{equation}
where we assume for simplicity that $g=Pg$. Thus, by definition, a vector field $u=u(t,x)$ is a strong solution of the Navier-Stokes problem \eqref{1} on the interval $t\in[0,T]$ if
\begin{equation}\label{1.sol}
u\in C(0,T;D(A)),\ \ \Dt u\in C(0,T; V),\ \eb\Dt^2 u\in C(0,T;\mathcal H)
\end{equation}
and $u$ satisfies \eqref{1.HNS} as an equality in $\mathcal H$.
\par
For  $\eb=0$ the limiting equation is the
classical Navier-Stokes system
\begin{equation}\label{4}
\begin{cases}
\Dt v+(u,\Nx)v+\Nx p=\Dx v+g,\\
\divv v=0,\ v\big|_{t=0}=v_0,\ \\
\end{cases}
\end{equation}
which  has a unique strong solution $v$ and this solution possesses
the dissipative estimate in $H^2$:
\begin{equation}\label{1.dis0}
\|v(t)\|_{H^2}\le Q(\|v(0)\|_{H^2})e^{-\alpha t}+Q(\|g\|_{L^2}),
\end{equation}
where the monotone function $Q$ and positive constant $\alpha$ are independent of
$t$, $g$ and $v_0$, see \cite{BV}, \cite{Tem} and the references therein.
Moreover, differentiating equation \eqref{1} in time and arguing in a
standard way, we may obtain the corresponding estimates for the time derivatives of $v$, namely,
\begin{multline}\label{1.der}
\|\Dt v(t)\|_{L^2}+\|\Dt v\|_{L^2(t,t+1;V)}+\|\Dt^2 v\|_{L^2(t,t+1;V^*)}\le\\
Q(\|v(0)\|_{H^2})e^{-\alpha t}+Q(\|g\|_{L^2}).
\end{multline}
Since the derivation of this estimate from estimate \eqref{1.dis0} is
straightforward, we  leave it for the reader.

\begin{remark}
In what follows we shall be using the same notation
for different monotone increasing functions in dissipative
estimates like~\eqref{1.dis0}, \eqref{1.der}.
\end{remark}

\section{Key dissipative estimate}\label{s3}
The main aim of this section is to obtain the analogue of the dissipative estimate \eqref{1.dis0} for the case $\eb>0$. The main difference here is that, in contrast to the classical Navier-Stokes equations, we do not have basic energy identity if $\eb>0$. Moreover, we expect that the solutions of the perturbed problem may blow up in finite time if the initial energy is large enough, see Section \ref{s4} for more details. Thus, 
we may expect only that \eqref{1.dis0} remains true if $\eb>0$ is small or the initial energy is not very large. Namely, the following estimate
can be considered as a main result of the paper.

\begin{theorem}\label{Th1.main} Let the external forces $g\in \mathcal H$. Then, for every $R>0$, there exists $\eb_0=\eb_0(R)>0$ such that, for every $\eb\le\eb_0$ and every initial data $\xi_u(0)\in E^1$ satisfying $\|\xi_u(0)\|_{\mathcal E^s_\eb}\le R$, problem \eqref{1} possesses a unique global strong solution $\xi_u(t)\in E^1$ and the following dissipative estimate holds:
\begin{equation}\label{3}
\|\xi_u(t)\|_{\mathcal E^1_\eb}\le Q(\|\xi_u(0)\|_{\mathcal E_\eb^1})e^{-\alpha t}+Q(\|g\|_{L^2}),
\end{equation}
 where the positive constant $\alpha$ and monotone function $Q$ are
independent of $R$, $\eb\le\eb_0$, $t\ge0$ and the initial data $\xi_u(0)$
satisfying $\|\xi_u(0)\|_{\mathcal E_\eb^1}\le R$.
\end{theorem}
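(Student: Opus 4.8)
The plan is to obtain a priori estimates for the truncated energy norm $\|\xi_u\|_{E^1_\eb}^2=\eb\|\Dt u\|_{V}^2+\|u\|_{D(A)}^2$ by a carefully weighted energy method, treating the hyperbolic term $\eb\Dt^2 u$ as a singular perturbation of the parabolic Navier--Stokes flow. The starting point is the observation that for $\eb=0$ the limiting problem \eqref{4} has the strong dissipative estimate \eqref{1.dis0}--\eqref{1.der}; so the strategy is to show that, as long as $\|\xi_u(t)\|_{\mathcal E^1_\eb}$ stays in a controlled range, the hyperbolic correction can be absorbed, and then to close the argument by a continuity/bootstrap argument in $t$ together with the standard local existence theory for the hyperbolic semilinear equation \eqref{1.HNS}.

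\textbf{Step 1: Local existence and uniqueness.} Equation \eqref{1.HNS} is, for fixed $\eb>0$, a semilinear wave-type equation $\eb\Dt^2 u+\Dt u+Au=-P((u,\Nx)u)+g$ in $\mathcal H$. The linear part generates a strongly continuous group on $E^s$, and the nonlinearity $u\mapsto P((u,\Nx)u)$ is locally Lipschitz from $D(A)$ to $\mathcal H$ in dimension two (using $H^2\hookrightarrow W^{1,4}$ and the algebra/multiplication properties of Sobolev spaces on a bounded 2D domain). Hence a standard fixed-point argument yields a unique local strong solution in the class \eqref{1.sol} on a maximal interval $[0,T_{\max})$, with $T_{\max}$ bounded below in terms of $\|\xi_u(0)\|_{\mathcal E^1_\eb}$, and with the blow-up alternative: either $T_{\max}=\infty$ or $\|\xi_u(t)\|_{\mathcal E^1_\eb}\to\infty$ as $t\to T_{\max}$. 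So everything reduces to an a priori bound.

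\textbf{Step 2: The basic weighted energy estimate.} Multiply \eqref{1.HNS} by $\Dt u+\delta u$ in $\mathcal H$, for a small parameter $\delta=\delta(\eb)$ to be chosen (typically $\delta\sim$ a fixed small constant, or $\delta\sim\sqrt\eb$, adjusted so that the resulting quadratic form is coercive). This produces an energy functional of the form
\begin{equation}
\mathcal L_0(t)=\tfrac12\eb\|\Dt u\|_{L^2}^2+\tfrac12\|u\|_{V}^2+\eb\delta(\Dt u,u)+\tfrac{\delta}{2}\|u\|_{L^2}^2,
\end{equation}
which is equivalent (for $\eb$ small and $\delta$ chosen appropriately) to $\eb\|\Dt u\|_{L^2}^2+\|u\|_V^2$, and whose derivative satisfies
\begin{equation}
\tfrac{d}{dt}\mathcal L_0+\delta'\bigl(\eb\|\Dt u\|_{L^2}^2+\|u\|_V^2\bigr)\le |(P((u,\Nx)u),\Dt u+\delta u)|+|(g,\Dt u+\delta u)|
\end{equation}
for some $\delta'>0$. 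The force term is handled by Cauchy--Schwarz. The nonlinear term is the crux: $(P((u,\Nx)u),\Dt u)$ has no sign and, unlike the parabolic case, cannot be integrated by parts away using $\divv u=0$ because it is paired against $\Dt u$ rather than $u$; this is exactly the reason there is no energy identity for $\eb\ne0$. One estimates $\|(u,\Nx)u\|_{L^2}\le C\|u\|_{L^\infty}\|\nabla u\|_{L^2}\le C\|u\|_{D(A)}\|u\|_V$ and then uses Young's inequality, paying with a higher norm $\|u\|_{D(A)}$, which forces the next level of the estimate.

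\textbf{Step 3: The $H^2$-level estimate and closing the bootstrap.} Multiply \eqref{1.HNS} by $A\Dt u+\delta Au$ in $\mathcal H$ to get a functional $\mathcal L_1(t)\sim\eb\|\Dt u\|_V^2+\|u\|_{D(A)}^2$ (i.e. the truncated norm \eqref{trunc}) with a differential inequality
\begin{equation}
\tfrac{d}{dt}\mathcal L_1+\beta\mathcal L_1\le C\bigl|(P((u,\Nx)u),A\Dt u+\delta Au)\bigr|+C\|g\|_{L^2}^2 .
\end{equation}
Here the term $(P((u,\Nx)u),\delta Au)=\delta((u,\Nx)u,Au)$ can be estimated parabolically (as in the proof of \eqref{1.dis0}) by $\delta C\|u\|_{D(A)}^{2}\|u\|_V$ plus lower order, i.e.\ it is cubic and, crucially, can be made subordinate to $\beta\mathcal L_1$ provided $\|u\|_V$ (equivalently $\mathcal L_0$) is small --- and $\mathcal L_0$ need not be small initially, but by Step 2 it decays like $Q(R)e^{-\alpha t}+Q(\|g\|)$, so after a finite time it enters a small absorbing ball; the genuinely dangerous term is $\delta'\eb(P((u,\Nx)u),A\Dt u)$, which one bounds by $C\eb\|u\|_{D(A)}\|u\|_V\|\Dt u\|_V\le \tfrac{\beta}{4}\eb\|\Dt u\|_V^2+C\eb\|u\|_{D(A)}^2\|u\|_V^2$, again cubic in the energy. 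The outcome is an inequality of the schematic form $\tfrac{d}{dt}y+\beta y\le C\,y^{3/2}+C\|g\|^2$ (where $y$ combines $\mathcal L_0$ and $\mathcal L_1$, with $\eb$-weights), valid as long as $y$ stays below a threshold. A standard barrier/continuity argument then shows: if $y(0)\le R^2$ and $\eb\le\eb_0(R)$ is small enough that the hyperbolic corrections (all carrying a factor $\eb$ or $\sqrt\eb$) are dominated, then $y(t)$ never reaches the threshold, hence (by the blow-up alternative) the solution is global, and moreover $y(t)\le Q(R)e^{-\alpha t}+Q(\|g\|_{L^2})$.

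\textbf{Step 4: From the truncated norm to the full norm.} Finally, as announced after \eqref{trunc}, one recovers the full norm \eqref{2}, i.e.\ the extra term $\|\Dt u\|_{L^2}^2$ (at $s=1$, the term $\|\Dt u\|_{D(A^{0})}^2=\|\Dt u\|_{L^2}^2$ is already present; more precisely one needs $\|\Dt u\|_{L^2}^2$ which is controlled once $\eb\|\Dt u\|_V^2$ and $\|u\|_{D(A)}^2$ are, via the equation: $\Dt u=-\eb\Dt^2u-Au-P((u,\Nx)u)+g$ read at fixed $t$, or simply by interpolation $\|\Dt u\|_{L^2}\le\|\Dt u\|_V^{?}$ combined with the equation to bound $\Dt^2 u$). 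Concretely, one uses \eqref{1.HNS} to express $\|\Dt u\|_{L^2}$ in terms of $\eb\|\Dt^2 u\|_{\mathcal H}$, $\|u\|_{D(A)}$, $\|P((u,\Nx)u)\|_{\mathcal H}$ and $\|g\|$; a further energy estimate (multiplying the time-differentiated equation by $\Dt^2 u$, or testing suitably) controls $\eb\|\Dt^2 u\|$. Appending this to the bound on the truncated norm gives the dissipative estimate \eqref{3} in the full norm $\mathcal E^1_\eb$.

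\textbf{Main obstacle.} The essential difficulty is the absence of any energy identity for $\eb\ne0$: the nonlinear term $(P((u,\Nx)u),\Dt u)$ is genuinely indefinite and of critical size, so there is no hope of an unconditional estimate — this is consistent with the blow-up phenomenon the authors flag for the hyperbolic Burgers model. The art is in the choice of the auxiliary weights $\delta$ (and their possible $\eb$-dependence) so that the ``good'' dissipation $\delta\|u\|_{D(A)}^2$ coming from the $Au$ term controls the cubic nonlinear contributions after $\mathcal L_0$ has relaxed into a small ball, while simultaneously all the purely hyperbolic error terms come with a positive power of $\eb$ and hence are killed by taking $\eb_0(R)$ small. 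Getting the quantifier order right --- $\eb_0$ depending on $R$, but $\alpha$ and $Q$ independent of both $R$ and $\eb$ --- is what makes the bookkeeping delicate.
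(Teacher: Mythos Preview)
Your proposal has a genuine gap: the direct energy method you outline cannot close for large initial data, because the smallness of $\eb$ never enters the key nonlinear estimate. Concretely, in your Step~3 the term $(P((u,\Nx)u),A\Dt u)$ carries no factor of $\eb$ --- your writing ``$\delta'\eb(P((u,\Nx)u),A\Dt u)$'' is an error, since the test function is $A\Dt u+\delta Au$ and the nonlinearity pairs with $A\Dt u$ with coefficient~$1$. After integration by parts and 2D interpolation the best one gets is $|((u,\Nx)u,A\Dt u)|\le C\|u\|_V^{1/2}\|u\|_{H^2}^{3/2}\|\Dt u\|_{H^1}$, leading to a differential inequality of the form $\frac{d}{dt}\mathcal L_1+\beta\mathcal L_1\le C\mathcal L_0^{1/2}\mathcal L_1^{3/2}+C_g$, with no $\eb$ anywhere. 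Your hope that $\mathcal L_0$ relaxes first is circular: the Step~2 inequality for $\mathcal L_0$ itself contains $((u,\Nx)u,\Dt u)$, which after Young produces a term like $C\|u\|_{H^2}\mathcal L_0^{3/2}$ coupling back to $\mathcal L_1$. The resulting coupled system is $\eb$-independent and only closes below a fixed barrier --- i.e.\ for \emph{small} data, not for arbitrary $R$ with $\eb\le\eb_0(R)$.

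The paper supplies precisely the missing mechanism: it compares $u$ with the solution $v$ of the limiting Navier--Stokes problem \eqref{4} having the \emph{same} initial velocity $v(0)=u(0)$, and sets $w=u-v$. Since $v$ is globally controlled in $H^2$ by 2D NS theory, the burden shifts to $w$. Crucially $w(0)=0$, so in the \emph{weak} energy $E^{-1}_\eb(w)\sim\eb\|\Dt w\|_{H^{-1}}^2+\|w\|_H^2$ one has $E^{-1}_\eb(w(0))\le C_R\eb$ --- this is where $\eb$-smallness enters. Testing the $w$-equation against $A^{-1}(\Dt w+\alpha w)$ shows that $E^{-1}_\eb(w)$ stays $O(\eb)$ on any finite interval. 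Then, in the $H^2$-level estimate for $u$, the nonlinearity is decomposed as $(u,\Nx)u=(w,\Nx)w+(v,\Nx)v+(v,\Nx)w+(w,\Nx)v$; the purely-$v$ term is harmless by \eqref{1.dis0}, the mixed terms are linear in $w$ with bounded coefficients, and the dangerous $(w,\Nx)w$ term is controlled via interpolation by $E^{-1}_\eb(w)^{1/2}E^1_\eb(u)^{3/2}$, so the small prefactor $E^{-1}_\eb(w)^{1/2}\sim\eb^{1/2}$ makes it absorbable. A continuity argument then closes on $[0,T]$ for $\eb\le\eb_0(T,R)$; a further ``parabolic'' estimate (testing against $Au$) removes the obstruction to iterating in $T$, and the passage to the full $\mathcal E^1_\eb$-norm is done, as you correctly anticipate, by a boundary-layer ODE estimate on $\eb\Dt^2 u+\Dt u=h$.
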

\begin{proof} We give below only the formal derivation of the dissipative estimate \eqref{3}. The existence a solution can then be  obtained in a standard way, for instance, by the Galerkin approximations, see \cite{BV}, \cite{TemNS}. The uniqueness of strong solutions is also straightforward and we do not discuss it here.
\par
The derivation of estimate \eqref{3} is based on the fact that a solution $u(t)$
of the perturbed equation (with small $\eb>0$), if it exists,
should be close (on a finite time interval $t\in[0,T]$)
to the corresponding limit Navier-Stokes system~\eqref{4}, which
corresponds to $\eb=0$). Thus, we take the strong solution $v=v(t)$ of
the Navier-Stokes problem~\eqref{4}
as an approximation of the corresponding solution $u=u(t)$ of the perturbed system.
Important for us is that
$$
v\big|_{t=0}=u\big|_{t=0}.
$$
Then, on the one hand, \eqref{4} is globally solvable and we have
\eqref{1.dis0} and \eqref{1.der}. 
On the other hand, we expect that the difference
\begin{equation}\label{uv}
w(t)=u(t)-v(t)
\end{equation}
is small in the appropriate norm which in turn would allow us to
verify the desired estimate using the perturbation arguments.
However, the realization of this scheme is not straightforward for
two reasons. The first one is that the passage from $\eb=0$ to $\eb>0$
is a {\it singular} perturbation and, in particular, a boundary
layer appears at $t=0$. The second, more technical one is that the
nonlinearity $(u,\Nx)u$ is {\it critical} from the point of view of
a hyperbolic equation even in the phase space $\mathcal E^1_\eb$.
So, the interplay between these too difficulties makes the
derivation of the desired estimate rather delicate and rather
technical. For the convenience of the reader, we will split it in
several steps.

\subsection*{Step 1. Estimates for $w$: weak norms} The function $w=u-v$  solves
the equation
\begin{equation}\label{7}
\begin{cases}
\eb\Dt^2 w\!+\!\Dt w\!+\!Aw\!=\\=-\!P\bigl[((w,\Nx)w)\!+\!(w,\Nx))v\!+\!((v,\Nx)w)\bigr]-\eb\Dt^2v,\\
\ w\big|_{t=0}=0,\ \ \Dt w\big|_{t=0}=u_0'-\Dt v(0),\\
w\big|_{\partial\Omega}=0.
\end{cases}
\end{equation}
We multiply this equation by $A^{-1}(\Dt w+\alpha w)$ where $\alpha>0$
and first consider the linear left-hand side. Integrating
by parts and taking into account~\eqref{trunc} we obtain
$$
\aligned
\bigl(\eb\Dt^2 w+\Dt w+&Aw,A^{-1}(\Dt w+\alpha w)\bigr)=\\=
&\frac12\frac d{dt}E^{-1}_\eb(\xi_w)+
(1-\varepsilon\alpha)\|\partial_t w\|_{H^{-1}}^2+\alpha\|w\|^2_H,
\endaligned
$$
where
$$
E^{-1}_\varepsilon(w):=
\|\xi_w\|^2_{E^{-1}_\varepsilon}
+\alpha\|w\|_{H^{-1}}^2+2\varepsilon\alpha(w,\partial_t w)_{-1}.
$$
Next, in view of the Poincar\'e inequality
\begin{equation}\label{Poincare}
\|w\|_{H^{-1}}^2\le\lambda_1^{-1}\|w\|^2_H,
\end{equation}
 taking $\alpha$ sufficiently small (and independent of  $\varepsilon$ as
$\varepsilon\to0$), we have
\begin{equation}\label{equivfunc}
c_1\|\xi_w\|^2_{{E}^{-1}_\varepsilon}\le {E}^{-1}_\varepsilon(\xi_w)\le c_2\|\xi_w\|_{{E}^{-1}_\varepsilon},
\end{equation}
where positive constants $c_1$ and $c_2$ are independent of $\eb$ and $\alpha$.
\par
Thus, the scalar product of~\eqref{7} and $A^{-1}(\Dt w+\alpha w)$
gives
\begin{equation}\label{8}
\aligned
&\frac d {dt} E_\eb^{-1}(\xi_w)+(1-\alpha\eb)\|\Dt w\|^2_{H^{-1}}+\alpha\|w\|^2_H\le\\
 &\ \ \le|((w,\Nx)w,A^{-1}(\Dt w+\alpha w))|+
 |((w,\Nx)v,A^{-1}(\Dt w+\alpha w))|+\\
 &\ \
 +|((v,\Nx)w,A^{-1}(\Dt w+\alpha w))|+\eb|(\Dt^2v,A^{-1}(\Dt w+\alpha w)|.
 \endaligned
\end{equation}
 Moreover, under this choice of $\alpha$, there exists $\beta>0$,
 which is also independent of $\eb\to0$, such that~\eqref{8} takes the following form
(here and below we write $E^{-1}_\eb(w)$ instead of $E_\eb^{-1}(\xi_w)$):
\begin{multline}\label{81}
\frac d {dt} E_\eb^{-1}(w)+\beta E^{-1}_\eb(w)+\beta \|\Dt w\|^2_{H^{-1}}\le
 |(\divv(w\otimes w),A^{-1}(\Dt w+\alpha w))|+\\+|(\divv(w\otimes v),A^{-1}(\Dt w+\alpha w))|+\\
 +|(\divv(v\otimes w),A^{-1}(\Dt w+\alpha w))|+\eb|(\Dt^2v,A^{-1}(\Dt w+\alpha w)|,
\end{multline}
where
$$
\divv (u\otimes v):=\sum_{i,j=1}^2\partial_i(u^iv^j)=(u,\Nx)v-v\divv u=(u,\Nx)v
$$
on divergent free vector fields.
So, we only need to estimate the terms on the right-hand side of this inequality.
The most difficult term here is the first one, and we are actually
unable to estimate in a closed form. Instead, using the regularity of
the Stokes operator and the interpolation inequality
$\|w\|_{L^4}^4\le C\|w\|_{H^2}\|w\|^3_{L^2}$, we can estimate it as follows:
\begin{multline*}\label{9}
|(\divv(w\otimes w),A^{-1}(\Dt w+\alpha w))|\le\\\le  C\|w\otimes w\|_{L^2}\|A^{-1}(\Dt w+\alpha w)\|_{H^1}\le
C\|w\|_{L^4}^2(\|\Dt w\|_{H^{-1}}+\|w\|_{H^{-1}})
\le\\\le C\|w\|_{H^{2}}\|w\|_{L^2}^3+\frac\beta 3(\|\Dt w\|^2_{H^{-1}}+\|w\|_{L^2}^2).
\end{multline*}
The second and third terms are estimated in the same way,
but using the fact that $\|v\|_{L^\infty}$ is under  control~\eqref{1.dis0}.
Using also~\eqref{Poincare} we obtain
\begin{multline*}
|(\divv(w\otimes v),A^{-1}(\Dt w+\alpha w))|+
|(\divv(v\otimes w),A^{-1}(\Dt w+\alpha w))|\le\\
\le C\||w|\cdot|v|\|_{L^2}(\|\Dt w\|_{H^{-1}}+\|w\|_{H^{-1}})\le
 C_R\|w\|^2_H+\frac\beta 3 \|w\|_{H^{-1}}^2.
\end{multline*}
For the last term in~\eqref{81} we have
$$
\eb|(\Dt^2v,A^{-1}(\Dt w+\alpha w)|\le
C\eb^2\|\Dt^2 v\|^2_{H^{-1}}+\frac\beta 3(\|\partial_t w\|_{H^{-1}}^2+\|w\|^2_H),
$$
and collecting the above estimates we finally obtain
\begin{equation}\label{10}
\frac d {dt} E_\eb^{-1}(w)\le \bigl(C_R+C\|w\|_{H^2}E_{\eb}^{-1}(w)^{1/2}\bigr) E_{\eb}^{-1}(w)+
C\eb^2\|\Dt^2 v\|^2_{H^{-1}},
\end{equation}
where the constant $C_R$ depends only on $R$
(recall that we assume that $\|\xi_u(0)\|_{E_\eb^1}\le R$). It is also important that
by the equivalence~\eqref{equivfunc}
\begin{equation}\label{11}
E_\eb^{-1}(w(0))\le C\eb\|\Dt u(0)\|_{H^{-1}}^2
\le C_R\eb.
\end{equation}
Unfortunately, estimate \eqref{10} is not enough to obtain an
estimate for $w$ due to the presence of the $H^2$-norm of $w$,
so we need more estimates to control it.
\par
\subsection*{Step 2. $H^2$-estimates for $u$.}
Recall that $u=v+w$ and the $H^2$-norm of $v$ is under  control,
so estimating the $H^2$-norm of $u$ is equivalent to estimating
the $H^2$-norm of $w$. However, it is more convenient  to
work with $u$ on this stage (due to the presence of the term
$\eb\Dt^2 v$ in the right-hand side of the equation for $w$
which requires too much regularity of the initial data to
be properly estimated). Multiplying the initial
equation \eqref{1} by $A(\Dt u+\alpha u)$ and arguing as before, we end up with
\begin{multline}\label{12}
\frac d{dt} E^1_\eb(u)+\beta E^1_\eb(u)+\beta\|\Dt u\|_{H^1}^2\le \\ \le
|((u,\Nx)u,A(\Dt u+\alpha u))|+|(g,A(\Dt u+\alpha u))|.
\end{multline}
The key problem is again to estimate the first term on the right-hand side.
 We write it as follows
$$
(u,\Nx)u=(w,\Nx) w+(v,\Nx v)+(v,\Nx) w+(w,\Nx)v
$$
and estimate each term separately. For simplicity we will estimate only the terms
with multiplication by $A\Dt u$ (the multiplication by $Au$ is analogous, but easier).
Integrating by parts  we have
\begin{multline}\label{13}
|((w,\Nx)w,A\Dt u)|\le C(\|w\|_{L^\infty}^2\|w\|_{H^2}^2+\|w\|_{W^{1,4}}^4)+
\frac\beta4\|\Dt u\|^2_{H^1}\le\\
\le C E_\eb^{-1}(w)^{1/2}[E^1_\eb(u)^{3/2}+\|v\|^3_{H^2}]+\frac\beta4\|\Dt u\|^2_{H^1},
\end{multline}
where we have  used the interpolation inequalities
$$
\|w\|_{L^\infty}^2\le C\|w\|_{L^2}\|w\|_{H^2},\quad
 \|w\|_{W^{1,4}}^4\le C\|w\|_{L^2}\|w\|_{H^2}^3
$$
and~\eqref{uv}. Since we have~\eqref{1.dis0}, the second term is straightforward
\begin{multline*}
|((v,\Nx)v,A\Dt u)|\le C\|v\|_{H^2}^2\|\Dt u\|_{H^1}\le\\
\le Q(\|g\|_{L^2})+Q(\|u(0)\|_{H^2})e^{-\alpha t}+\frac\beta4\|\Dt u\|^2_{H^1}.
\end{multline*}
The rest two terms can be estimated analogously to \eqref{13}:
\begin{multline*}
|((v,\Nx)w,A\Dt u)|+|(w,\Nx)v,A\Dt u)|\le
 C\|v\|_{H^2}^2\|w\|_{H^2}^2+\\+ \frac\beta4\|\Dt u\|^2_{H^1}\le
 \frac\beta4\|\Dt u\|^2_{H^1}+\\+
 \left(Q(\|g\|_{L^2})+Q(\|u(0)\|_{H^2}) e^{-\alpha t}\right)\| w\|^2_{H^2}\le\\
 \le \frac\beta4\|\Dt u\|^2_{H^1}+\left(Q'(\|g\|_{L^2})+Q'(\|u(0)\|_{H^2}) e^{-\alpha t}\right)
 (1+\|u\|_{H^2}^2)
\end{multline*}
with a different function $Q'$ with the same properties.
Inserting these estimates into \eqref{12} and omitting the primes, we get
\begin{multline}\label{15}
\frac d{dt} E^1_\eb(u)+\beta E^1_\eb(u)+\frac\beta 4\|\Dt u\|_{H^1}^2\le\\\le
 C E_\eb^{-1}(w)^{1/2}[E^1_\eb(u)^{3/2}+\|v\|^3_{H^2}]+\\+
\left(Q(\|g\|_{L^2})+Q(\|u(0)\|_{H^2}) e^{-\alpha t}\right)(1+\|u\|^2_{H^2}).
\end{multline}
Inequalities \eqref{10}, \eqref{11} and \eqref{15} are enough to
verify the global existence on any {\it finite} interval $t\in[0,T]$ if
$\eb\le \eb(T)$ is small enough, but still not sufficient to iterate the
estimates and get the global existence and dissipativity
(due to the presence of the big term $Q(\|g\|_{L^2})\|u\|_{H^2}^2$
which can not be absorbed by the term $\beta E^1_\eb(u)$ in the left-hand side).
To overcome this we need one more step involving the ``parabolic" type estimates for $A u$.

\subsection*{Step 3. Parabolic estimates for $Au$}
 We multiply equation \eqref{1} by $Au$ to get
\begin{multline}\label{16}
\frac d{dt}\(\eb(\Dt u,Au)+\frac12\|u\|_{H^1}^2\)+\|u\|^2_{H^2}-\eb\|\Dt u\|^2_{H^1}\le\\
\le |(u,\Nx) u, Au)|+|(g,Au)|.
\end{multline}
Estimating the non-linear term analogously to Step 2, we get
\begin{multline}\label{17}
\frac12\|u\|^2_{H^2}+\frac d{dt}\(\eb(\Dt u,Au)+\frac12\|u\|_{H^1}^2\)-\eb\|\Dt u\|^2_{H^1}\le\\\le
 C E_\eb^{-1}(w)^{1/2}[E^1_\eb(u)^{3/2}+
 \|v\|^3_{H^2}]+\\+Q(\|g\|_{L^2})+Q(\|u(0)\|^2_{H^2}e^{-\alpha t}.
\end{multline}
We now multiply inequality \eqref{17} by $2Q(\|g\|_{L^2})$ and add the
result to inequality \eqref{15}. This gives the following estimate
\begin{multline}\label{171}
\frac d{dt} \bar E^1_\eb(u)+\beta  E^1_\eb(u)+
\left(\frac\beta 4-2\eb Q(\|g\|_{L^2})\right)\|\Dt u\|_{H^1}^2
\le\\
\le C \bigl(1+2 Q(\|g\|_{L^2})\bigr)E_\eb^{-1}(w)^{1/2}[E^1_\eb(u)^{3/2}+\|v\|^3_{H^2}]+\\+
2Q(\|g\|_{L^2})^2+2Q(\|g\|_{L^2})Q(\|u(0)\|_{H^2})e^{-\alpha t}+
Q(\|u(0)\|_{H^2})\|u\|_{H^2}^2e^{-\alpha t},
\end{multline}
where
$$
\bar E^1_\eb(u):=E^1_\eb(u)+Q(\|g\|_{L^2})\left(\,2\eb(\Dt u,Au)+\|u\|^2_{H^1}\right).
$$
Thus, for sufficiently small $\eb$ we have
$$
C^{-1}E^1_\eb(u)\le \bar E^1_\eb(u)\le C E^1_\eb(u),
$$
where the constant $C=C_R$ is independent of $\eb$. Therefore, we may replace $E^1_\eb(u)$ by
$\bar E_\eb^1(u)$. Using the dissipative estimate~\eqref{1.dis0} for
the term $\|v\|_{H^2}^3$ in~\eqref{171} and dropping the bar-sign we obtain the final estimate:
\begin{multline}\label{18}
\frac d{dt}  E^1_\eb(u)+\left(\beta-Q(\|u(0)\|_{H^2})e^{-\alpha t}-
C[E^{-1}_\eb(w)]^{1/2}[ E^1_\eb(u)]^{1/2}\right)  E^1_\eb(u)\le\\\le C (1+E_\eb^{-1}(w)^{1/2})(Q(\|g\|_{L^2})+Q(\|u(0)\|_{H^2}) e^{-\alpha t}),
\end{multline}
where all of the constants are positive and are independent of $\eb\to0$.
As we will see below, this estimate together with inequalities \eqref{10} and \eqref{11} is
 sufficient to obtain the desired dissipative estimate for $u$.

\subsection*{Step 4. Completion of the proof}
Following \cite{Zel-DCDS} (see also \cite{Zel-PLMS} and \cite{Gra}), we first establish the dissipative estimate
on a {\it finite} time interval $t\in[0,T]$ where $T$ is large
enough. Namely, we claim that inequalities \eqref{10},\eqref{11}
and \eqref{18} imply the following intermediate result.

\begin{lemma}\label{Lem1.1} For every $R>0$ and every $T\in\R_+$ there exists
$\eb_0=\eb_0(T,R)$ such that for every $\eb<\eb_0$ and every initial data
$\xi_u(0)$ satisfying $\|\xi_u(0)\|_{ E^1_\eb}\le R$, there exists a unique solution $\xi_u(t)$, $t\in [0,T]$ of problem \eqref{1} satisfying the dissipative estimate
\begin{equation}\label{19}
\eb\|\Dt u(t)\|_{H^1}^2+\|u(t)\|^2_{H^2}\le Q_R(\|\xi_u(0)\|_{E^1_\eb})e^{-\alpha_R t}+Q(\|g\|_{L^2}),
\end{equation}
where the function $Q$ is independent of $T$, $R$ and $\eb$
and the functions $Q_R$ and $\alpha_R$ are independent of $T$ and $\eb$.
\end{lemma}
\begin{proof} Indeed, assume for the first that
\begin{equation}\label{1.bound}
C[E^{-1}_\eb(w)]^{1/2}[ E^1_\eb(u)]^{1/2}\le \frac12\beta,\ \ \ E^{-1}_\eb(w(t))\le 1,\ \  \ t\in[0,T].
\end{equation}
Then the Gronwall inequality applied to \eqref{18} gives us the desired estimate
\begin{equation}\label{1.good}
 E^1_\eb(u(t))\le \bar Q(\|g\|_{L^2})+\bar Q(\|\xi_u(0)\|_{E^1_\eb})e^{-\bar\beta t},
\end{equation}
for some monotone function $\bar Q$ and positive constant $\bar\beta$ which are independent of $t$ (but may depend on $R$). Recall that this estimate is obtained under the assumption \eqref{1.bound} and we still need to justify it. To this end, we note that estimate \eqref{1.good} gives us the following control
\begin{equation}
\|w(t)\|_{H^2}\le C( E^1_\eb(u(t))+C\|v\|_{H^2}^2)^{1/2}\le Q_R
\end{equation}
and, therefore \eqref{10} reads
\begin{equation}\label{1.weakest}
\frac d {dt} E_\eb^{-1}(w)\le \(C_R+Q_RE^{-1}_\eb(w)^{1/2}\) E_{\eb}^{-1}(w)+C\eb^2\|\Dt^2 v\|^2_{H^{-1}},
\end{equation}
This estimate, together with the fact that $E^{-1}_\eb(w(0))\le C_R\eb$ implies that, for sufficiently small $\eb\le \eb_0(T,R)$, the quantity $E^{-1}_\eb(w(t)))$ remains small and can be estimated as follows:
\begin{equation}
E_\eb^{-1}(w(t))\le Q_R\eb e^{(C_R+Q_R)T},\ \ t\in[0,T].
\end{equation}
Thus, we end up with the control
$$
C[E^{-1}_\eb(w(t))]^{1/2}E^1_\eb(u(t))]^{1/2}\le \bar Q_R\eb e^{\bar Q_RT},
$$
where the constant $Q_R$ is independent of $\eb\to0$.
Finally, since the bounds in~\eqref{1.bound} are satisfied for $t=0$ for sufficiently
small $\eb$, if we assume in addition that
 $\bar Q_R\eb e^{\bar Q_R T}\le \beta/2$, we  will get \eqref{1.bound}
 by continuity arguments. This finishes the proof of the lemma.
\end{proof}

Note that \eqref{19} still not enough to verify the global existence
 and dissipativity since the left-hand side does not contain
does the $L^2$-norm of $\Dt u$ without $\eb$, see definition~\eqref{2}.
 To overcome this problem and append $\|\Dt u\|_{L^2}$ to our estimate obtained in
the norm $E^1_\varepsilon$,  we recall  the boundary layer estimate for the second order ODE
\begin{equation}\label{20}
\eb\Dt^2 u+\Dt u=h(t),\ \ \|h(t)\|_{L^2}\le C(\|u(t)\|_{H^2}^2+\|g\|_{L^2}+1),
\end{equation}
Namely,
\begin{multline*}
\|\Dt u(t)\|_{L^2}\le \|\Dt u(0)\|_{L^2}e^{-t/\eb}+\frac1\eb\int_0^te^{-(t-s)/\eb}\|h(s)\|_{L^2}\,ds\le\\\le
\|\Dt u(0)\|_{L^2}e^{-t/\eb}+C\(1+\|g\|_{L^2}+\max_{s\in[0,t]}\|u(s)\|_{H^2}^2\).
\end{multline*}
Using this estimate on the interval $t\in[T-1,T]$ and assuming that $T\ge1$, we derive from \eqref{19} that
\begin{equation}\label{21}
\|\xi_u(t)\|_{\mathcal E^1_\eb}\le Q_R(\|\xi_u(0)\|_{\mathcal E^1_\eb})e^{-\alpha_R t}+Q(\|g\|_{L^2}),
\end{equation}
for $\|\xi_u(0)\|_{ \mathcal E^1_\eb}\le R$ and $1\le t\le T$. This estimate is already
enough to get the desired dissipative estimate and finish the proof of the theorem. Indeed,
 if we take $T=T(R)$ large enough, the $\mathcal E_\eb^1$-norm of the solution $u(t)$
 at point $t=T$ will be less than this norm at point $t=0$ according to \eqref{19} if $R$
 is large enough (say, if $R\ge 2Q(\|g\|_{L^2})$. This allows to iterate the procedure and
 obtain the global existence and estimate \eqref{3}. Thus, Theorem \ref{Th1.main} is proved.
\end{proof}

\section{Attractors and their singular limit as $\eb\to0$}\label{s.35}
In this section  we verify the existence of global attractors associated with the Navier-Stokes problem \eqref{1} and their convergence as $\eb\to0$ to the limit attractor associated with the classical Navier-Stokes problem. We first recall that the global well-posedness and dissipativity of problem \eqref{1} is established not for all initial data $\xi_u(0)\in E^1$ and not for all $\eb>0$, but only for relatively small $\eb>0$ and initial data satisfying the assumption
\begin{equation}\label{33.1}
\xi_u(0)\in B(0,R, \mathcal E^1_\eb):=\{\xi_0\in E^1,\ \ \|\xi_0\|_{\mathcal E^1_\eb}\le R\},
\end{equation}
where $R=R(\eb)$ is monotone increasing and satisfying
\begin{equation}
\lim_{\eb\to0}R(\eb)=\infty,
\end{equation}
see Theorem \ref{Th1.main}. By this reason, it looks natural to consider equation \eqref{1} in the phase space
\begin{equation}\label{33.phase}
\Phi_\eb:=\cup_{t\ge0}\bigl\{\xi_u(t),\ \ \xi_u(0)\in B(0,R,\mathcal E^1_\eb)\bigr\}.
\end{equation}
Then, according to Theorem \ref{Th1.main}, the solution semigroups
\begin{equation}
S_\eb(t)\xi_u(0):=\xi_u(t),\ \ S_\eb(t)\Phi_\eb\subset \Phi_\eb
\end{equation}
is well-defined and dissipative on $\Phi_\eb$ for $\eb>0$ being small enough. Moreover, these semigroups are continuous on $\Phi_\eb$ with respect to the initial data (in the topology of the space $E^1$) for every fixed $t$ and $\eb$ and, in particular, the set $\Phi_\eb$ is closed in $E^1$. Thus, we may speak about global attractors of semigroups $S_\eb(t)$ on $\Phi_\eb$. For the convenience of the reader, we remind the definition of a global attractor, see \cite{BV,Tem} for more details.

\begin{definition}\label{Def33.attr} A set $\mathcal A_\eb$ is a global attractor of the semigroup $S_\eb(t):\Phi_\eb\to\Phi_\eb$ if
\par
1) The set $\mathcal A_\eb$ is compact in $\Phi_\eb$;
\par
2) The set $\mathcal A_\eb$ is strictly invariant: $S_\eb(t)\mathcal A_\eb=\mathcal A_\eb$;
\par
3) The set $\mathcal A_\eb$ attracts the images of all bounded subsets of $\Phi_\eb$ as $t\to\infty$, i.e., for every bounded subset $B\subset \Phi_\eb$ and every neighbourhood $\mathcal O(\mathcal A_\eb)$ of the attractor $\mathcal A_\eb$, there exists $T=T(B,\mathcal O)$ such that
$$
S_\eb(t)B\subset \mathcal O(\mathcal A_\eb)
$$
if $t\ge T$. In the case $\eb>0$ the whole phase space $\Phi_\eb$ is bounded, so we may state and check the attraction property for $B=\Phi_\eb$ only.
\par
In the case where the phase space $\Phi_\eb$ is endowed by the {\it weak} or {\it strong} topology of the space $E^1$, we will refer to $\mathcal A_\eb$ as {\it weak} or {\it strong} attractor respectively.
\end{definition}
The most straightforward is the existence of a weak attractor, so we will start with establishing this fact.
\begin{proposition}\label{Prop33.weak} Let $\eb>0$ be small enough. Then the solution semigroup $S_\eb(t)$ acting on the phase space $\Phi_\eb$ defined above possesses a weak attractor $\mathcal A_\eb$ which satisfies the estimate
\begin{equation}\label{33.abound}
\|\mathcal A_\eb\|_{\mathcal E_\eb^1}\le\bar R,
\end{equation}
where $\bar R$ is independent of $\eb\to0$. As usual this attractor is generated by all bounded solutions of problem \eqref{1} defined for all $t$
\begin{equation}\label{33.aa}
\mathcal A_\eb=\mathcal K_\eb\big|_{t=0},
\end{equation}
where
\begin{multline}
\mathcal K_\eb:=\{\xi_u, \|\xi_u(t)\|_{\mathcal E^1_\eb}\le \bar R, \\ S_\eb(h)\xi_u(t)=\xi_u(t+h),\ \ h\ge0,\ t\in\R\}\subset C(\R,\Phi_\eb).
\end{multline}
\end{proposition}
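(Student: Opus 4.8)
The plan is to establish the two standard ingredients guaranteeing the existence of a weak global attractor --- a weakly compact absorbing set and the weak closedness of the solution semigroup --- and then to identify the attractor with the $\omega$-limit set of the absorbing set and with the set of all complete bounded trajectories. First I would extract a \emph{uniform absorbing ball}. For any $\xi_u(0)\in B(0,R(\eb),\mathcal E^1_\eb)$ the dissipative estimate \eqref{3} of Theorem \ref{Th1.main} gives $\|\xi_u(t)\|_{\mathcal E^1_\eb}\le Q(R(\eb))e^{-\alpha t}+Q(\|g\|_{L^2})$, so $\Phi_\eb$ is bounded in $E^1$; setting $\bar R:=Q(\|g\|_{L^2})+1$, which does \emph{not} depend on $\eb$, there is $T_0=T_0(\eb)$ with $S_\eb(t)\Phi_\eb\subset\mathcal B:=B(0,\bar R,\mathcal E^1_\eb)$ for all $t\ge T_0$. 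Since $R(\eb)\to\infty$, for $\eb$ small one has $\bar R<R(\eb)$, hence $\mathcal B\subset B(0,R(\eb),\mathcal E^1_\eb)\subset\Phi_\eb$ (take $t=0$ in \eqref{33.phase}), so $\mathcal B$ is a bounded absorbing subset of $\Phi_\eb$. Being closed, convex and bounded in the Hilbert space $E^1$, $\mathcal B$ is compact in the weak topology; moreover bounded subsets of $E^1$ are metrizable in this topology ($E^1$ is separable), so everything below may be carried out with sequences.

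The crucial and only non-routine step is the \emph{weak closedness of $S_\eb(t)$}. Here I would fix $t\ge0$ and a sequence $\xi_n\rightharpoonup\xi$ weakly in $E^1$ with $\xi_n,\xi\in\Phi_\eb$, and show $S_\eb(t)\xi_n\rightharpoonup S_\eb(t)\xi$. Since $\{\xi_n\}\subset\Phi_\eb$ is bounded, Theorem \ref{Th1.main} together with equation \eqref{1.HNS} yields bounds, uniform in $n$ on each interval $[0,T]$, for the corresponding solutions $u_n$: boundedness of $u_n$ in $C(0,T;D(A))$, of $\Dt u_n$ in $C(0,T;V)$ and of $\Dt^2 u_n$ in $C(0,T;\mathcal H)$ (the last with a constant depending on $1/\eb$, harmless since $\eb$ is fixed). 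By the Aubin--Lions--Simon compactness lemma, $u_n$ is relatively compact in $C(0,T;D(A^{s/2}))$ for every $s<2$; extracting a subsequence, $u_n\to\bar u$ there, $\Dt u_n\rightharpoonup\Dt\bar u$ and $\Dt^2 u_n\rightharpoonup\Dt^2\bar u$ weakly-$*$, and $u_n\otimes u_n\to\bar u\otimes\bar u$ strongly, so one may pass to the limit in \eqref{1.HNS} and conclude that $\bar u$ is a strong solution of \eqref{1} with $\xi_{\bar u}(0)=\xi$. Uniqueness of strong solutions gives $\bar u=u$, where $\xi_u(0)=\xi$, and since every subsequence admits a further subsequence converging to the same limit, the whole sequence satisfies $\xi_{u_n}(t)\rightharpoonup\xi_u(t)$; thus $S_\eb(t)$ is a closed (indeed weakly continuous) map on $(\Phi_\eb,\text{weak})$ for each $t\ge0$.

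With a weakly compact absorbing set and a closed semigroup in hand, the standard abstract theory of attractors (see \cite{BV,Tem}) applies to $\Phi_\eb$ equipped with the weak topology of $E^1$: the set
\[
\mathcal A_\eb:=\omega(\mathcal B)=\bigcap_{s\ge0}\Bigl[\bigcup_{t\ge s}S_\eb(t)\mathcal B\Bigr]_{w},
\]
where $[\,\cdot\,]_w$ denotes the weak closure in $E^1$, is non-empty, weakly compact, strictly invariant ($S_\eb(t)\mathcal A_\eb=\mathcal A_\eb$) and attracts $\Phi_\eb$, hence all its bounded subsets, in the weak topology; this is the required weak attractor. As $\mathcal A_\eb\subset\mathcal B=B(0,\bar R,\mathcal E^1_\eb)$ and the norm $\|\cdot\|_{\mathcal E^1_\eb}$ is weakly lower semicontinuous, the bound \eqref{33.abound} holds with $\bar R$ independent of $\eb\to0$. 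Finally I would obtain \eqref{33.aa} in the usual way: strict invariance allows one to extend any $\xi_0\in\mathcal A_\eb$ backward in time to a complete trajectory bounded by $\bar R$, which (using once more $\bar R<R(\eb)$ and the definition of $\Phi_\eb$) lies in $C(\R,\Phi_\eb)$, while conversely the attraction property together with invariance forces every bounded complete trajectory to take its values in $\mathcal A_\eb$; hence $\mathcal A_\eb=\mathcal K_\eb|_{t=0}$.

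The main obstacle is the second step: extracting enough strong compactness from the bounds of Theorem \ref{Th1.main} (uniform over initial data in $\Phi_\eb$) to pass to the limit in the quadratic nonlinearity $P((u,\Nx)u)$. Once this weak-closedness property is granted, the remainder is routine abstract attractor theory combined with the dissipative estimate already proved.
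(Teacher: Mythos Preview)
Your proposal is correct and follows essentially the same approach as the paper: establish a weakly compact absorbing ball from the dissipative estimate of Theorem~\ref{Th1.main} (the paper takes $\bar R=2Q(\|g\|_{L^2})$, you take $Q(\|g\|_{L^2})+1$), verify weak continuity of $S_\eb(t)$, and invoke the abstract attractor existence theorem from \cite{BV} to get both the attractor and the representation formula~\eqref{33.aa}. The only difference is that you actually spell out the weak-continuity step via Aubin--Lions compactness, whereas the paper declares it ``straightforward and standard'' and leaves it to the reader.
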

\begin{proof} In order to prove the proposition, it is sufficient to verify two facts. Namely, that there exists a {\it compact} absorbing set for the semigroup $S_\eb(t)$ and that the semigroup is weakly continuous on it, see \cite{BV} for details. The first fact follows from the dissipative estimate proved in Theorem \ref{Th1.main}. Indeed, estimate \eqref{3} guarantees that the ball $B(0,\bar R, \mathcal E^1_\eb)$ will be an absorbing ball for this semigroup if, say, $\bar R=2Q(\|g\|_{L^2})$. This ball is weakly compact by Alaoglu theorem. The weak continuity is straightforward and standard, so we left its rigorous verification to the reader. Thus, the existence of a {\it weak} attractor $\mathcal A_\eb$ is verified and the representation formula \eqref{33.aa} also followed from the abstract attractor's existence theorem and the proposition is proved.
\end{proof}
We are now ready to verify that the constructed weak attractor is actually a strong one.

\begin{proposition}\label{Prop33.statr} Let $\eb>0$ be small enough. Then the solution semigroup $S_\eb(t):\Phi_\eb\to\Phi_\eb$ associated with the hyperbolic Navier-Stokes system \eqref{1} possesses a strong global attractor which coincides with the weak attractor constructed above.
\end{proposition}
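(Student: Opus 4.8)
The plan is to upgrade the weak attractor $\mathcal A_\eb$ from Proposition~\ref{Prop33.weak} to a strong one by establishing \emph{asymptotic compactness} of the semigroup $S_\eb(t)$ in the strong topology of $E^1$. Once we know that every sequence $S_\eb(t_n)\xi_{u}^n$ with $t_n\to\infty$ and $\xi_u^n\in\Phi_\eb$ has a strongly convergent subsequence, the standard attractor machinery (see \cite{BV,Tem}) forces the weak attractor, which already exists and is the minimal closed strongly invariant attracting set, to coincide with the strong one: its compactness in the weak topology together with asymptotic compactness gives compactness in the strong topology, and the attraction property automatically holds in the stronger topology as well. So the whole proposition reduces to one assertion: bounded trajectories are precompact in $E^1$ as $t\to\infty$.

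To prove asymptotic compactness I would use the energy-equation (Ball's ``energy method'') approach, which is robust for hyperbolic problems lacking smoothing. Write the solution as $\xi_u=\xi_L+\xi_N$, where $\xi_L$ solves the linear hyperbolic Stokes equation $\eb\Dt^2 u_L+\Dt u_L+Au_L=0$ with the same initial data, and $\xi_N$ solves the equation with the nonlinearity $-P((u,\Nx)u)+g$ as a right-hand side and zero initial data. The linear part decays: $\|\xi_L(t)\|_{\mathcal E^1_\eb}\le Ce^{-\delta t/\eb}\|\xi_u(0)\|_{\mathcal E^1_\eb}$ by the same multiplier $A(\Dt u_L+\alpha u_L)$ used in Step~3 of Theorem~\ref{Th1.main} (with no forcing there is genuine exponential decay), so its contribution is negligible and, in particular, $\xi_L(t_n)\to 0$ strongly. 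For the nonlinear part I would show it is bounded in a \emph{more regular} space: differentiating equation \eqref{1} in time and testing with $A\Dt^2 u$ (or testing the $w$-equation \eqref{7} at a higher level), one propagates the dissipative estimate of Theorem~\ref{Th1.main} to the space $E^2$, using that in 2D the nonlinearity $P((u,\Nx)u)$ is controlled in $V$ by $\|u\|_{H^2}$, which is already uniformly bounded by \eqref{3}. Since $E^2\hookrightarrow\hookrightarrow E^1$ compactly (the Stokes operator has compact inverse), $\xi_N(t)$ lies in a fixed compact subset of $E^1$, and precompactness of $\xi_u(t_n)=\xi_L(t_n)+\xi_N(t_n)$ follows.

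The main obstacle is the regularization step for $\xi_N$: the nonlinearity $(u,\Nx)u$ is critical for the hyperbolic equation in $\mathcal E^1_\eb$, exactly the difficulty flagged in the proof of Theorem~\ref{Th1.main}, so one cannot naively bootstrap to $E^2$ without paying attention to $\eb$-uniformity and to the boundary layer at $t=0$. The remedy, as in Theorem~\ref{Th1.main}, is again to compare $u$ with the limit Navier-Stokes solution $v$ — whose time derivatives are as smooth as we like by \eqref{1.der} and parabolic smoothing — and to estimate only the difference $w=u-v$ in the higher norm, using the already-established smallness $E^{-1}_\eb(w)\to 0$ and the $H^2$-bound on $u$ to close a linear-in-the-top-norm Gronwall inequality. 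Alternatively, and more cheaply, one may avoid $E^2$ altogether and argue the energy method directly: the functional $E^1_\eb(u)$ (suitably corrected as in Step~3) satisfies an energy \emph{equality} up to terms that are either weakly continuous or decaying, so weak convergence $\xi_u(t_n)\rightharpoonup\xi$ on the attractor plus convergence of norms yields strong convergence — this requires only the estimates already derived in Section~\ref{s3}. I would present the energy-method version as the primary argument since it uses strictly less than what is in hand, and remark that the regularity bootstrap gives the same conclusion.
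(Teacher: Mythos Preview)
Your primary route---the energy method---is exactly what the paper does, and your identification of the overall structure (weak limit exists on $\mathcal K_\eb$, then upgrade to strong convergence via convergence of norms) is correct. But your one-line summary ``terms that are either weakly continuous or decaying'' hides the only real difficulty, and the proposal as written does not address it.

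When you multiply \eqref{1} by $A\Dt u+\alpha Au$ and integrate by parts in time to extract a total derivative from $((u,\Nx)u,A\Dt u)$, the nonlinear contribution produces the term $((u_n,\Nx)\Dt u_n,Au_n)$. This term is \emph{not} weakly continuous: it pairs the two top-order quantities $\Nx\Dt u_n\in L^2$ and $Au_n\in L^2$, neither of which converges strongly. Nor does it decay. The paper's remedy is to split
\[
((u_n,\Nx)\Dt u_n,Au_n)=((u,\Nx)\Dt u_n,Au_n)+((u_n-u,\Nx)\Dt u_n,Au_n),
\]
where $u$ is the weak limit. The second summand vanishes because $u_n\to u$ strongly in $C_{loc}(\R\times\bar\Omega)$ (from $H^2\hookrightarrow C$) while the remaining factors stay bounded. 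The first summand is then \emph{absorbed} into the positive part $L(\xi_{u_n})$ of the energy identity: one takes $\alpha$ \emph{large} (not small as in Section~\ref{s3}) and $\beta$ small so that the quadratic form
\[
Q(\xi):=L(\xi)-((u,\Nx)\Dt\,\cdot\,,A\,\cdot\,)
\]
is positive definite, using only that $\|u\|_{L^\infty}$ is uniformly bounded on the attractor. Weak lower semicontinuity of this corrected form then yields $\limsup E^1_\eb(\xi_{u_n}(0))\le E^1_\eb(\xi_u(0))$. This absorption trick is the one non-routine step, and ``suitably corrected as in Step~3'' does not capture it: Step~3 takes $\alpha$ small and never confronts this term.

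Your alternative decomposition $\xi_u=\xi_L+\xi_N$ with an $E^2$-bootstrap for $\xi_N$ is not what the paper does and, as you yourself flag, runs directly into the criticality of the nonlinearity; there is no evident way to close an $E^2$-estimate for the hyperbolic problem even with the comparison-to-$v$ machinery. (A minor point: the linear decay is $e^{-\delta t}$ with $\delta$ of order $\lambda_1$, not $e^{-\delta t/\eb}$, since the low Stokes modes decay at the parabolic rate.) The energy method is the right call; you just need to supply the missing absorption step above.
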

\begin{proof} We will check the asymptotic compactness of the semigroup $S_\eb(t)$ using the so-called energy method. To this end, we take an arbitrary sequence of the initial data $\xi_n\in\Phi_\eb$ and arbitrary sequence $t_n\to\infty$ and need to verify that the sequence $\{S_\eb(t_n)\xi_n\}_{n=1}^\infty$ is precompact. Let $\xi_{u_n}(t):=S_\eb(t+t_n)\xi_n$, $t\ge-t_n$ be a sequence of solutions of problem \eqref{1} associated with this sequence. Extending it by zero for $t\le -t_n$, from the key dissipative estimate \eqref{3}, we infer that $\xi_{u_n}$ is uniformly bounded in the space $L^\infty(\R, E^1)$. Thus, without loss of generality, we may assume that
\begin{equation}\label{33.conv}
\xi_{u_n}\to\xi_u\ \ \text{weakly-star in }\ L^\infty_{loc}(\R,E^1).
\end{equation}
Moreover, utilizing the fact that $u_n$ solves \eqref{1}, we also get that $\xi_{u_n}(t)\rightharpoondown \xi_{u}(t)$ for every fixed $t\in\R$ as well as the fact that the limit function $\xi_u\in \mathcal K_\eb$. In particular,
\begin{equation}\label{weakxi}
\xi_{u_n}(0)\rightharpoondown\xi_u(0)
\end{equation}
and to verify the desired asymptotic compactness we only need to check that this convergence is in a fact strong. We will utilize the so-called energy method, see \cite{B,MRW} for more details. Namely, multiplying equation \eqref{1} by $\Dt Au+\alpha Au$, where $\alpha>0$ is a {\it big} number which will be determined later, we get
\begin{multline}
\frac d{dt}\frac12\(\|\xi_u\|^2_{E^1_\eb}+\alpha\|u\|^2_{H^1}+2\alpha\eb(\Dt u,Au)\)+\\+\beta \(\|\xi_u\|^2_{E^1_\eb}+\alpha\|u\|^2_{H^1}+2\alpha\eb(\Dt u,Au)\)+(1-(\alpha+\beta)\eb)\|\Dt u\|^2_{H^1}+\\+(\alpha-\beta)\|u\|^2_{H^2}-\alpha\beta\|u\|^2_{H^1}-2\eb\alpha\beta(\Dt u,Au)+\\+((u,\Nx)u,\Dt Au+\alpha Au)=(g,\Dt Au+\alpha Au),
\end{multline}
where $\beta>0$ is also a positive number. The validity of the energy identity for strong solutions can be verified in a standard way, say, by approximating the solution $u$ by $P_Nu$, see e.g. \cite{BV}. Denoting
\begin{multline}
E_\eb^1(\xi_u):=\|\xi_u\|^2_{E^1_\eb}+\alpha\|u\|^2_{H^1}+2\alpha\eb(\Dt u,Au),\\ N(\xi_u):=\alpha((u,\Nx)u, Au)-((\Dt u,\Nx)u,Au)-\\-((u,\Nx)\Dt u,Au)-\alpha(g,Au)
\end{multline}
and
$$
L(\xi_u)\!:=\!(1-(\alpha+\beta)\eb)\|\Dt u\|^2_{H^1}+(\alpha-\beta)\|u\|^2_{H^2}-\alpha\beta\|u\|^2_{H^1}-2\eb\alpha\beta(\Dt u,Au) ,
$$
we transform the identity as follows:
\begin{equation}
\frac12 \frac d{dt}E^1_\eb(\xi_u)+\beta E^1_\eb(\xi_u)+L(\xi_u)+N(\xi_u)=\frac d{dt}I(u),
\end{equation}
where $I(u):=(g, Au)-((u,\Nx)u,Au)$.
\par
Using the last identity for solutions $u_n$ instead of $u$, after integration in time we get
\begin{multline}
E^1_\eb(\xi_{u_n}(0))+2\int_{-t_n}^0e^{2\beta s}(L(\xi_{u_n}(s))+N(\xi_{u_n}(s))+\beta I(u_n(s)))\,ds=\\=2I(u_n(0))+2e^{-\beta t_n}(E^1_\eb(\xi_{u_n}(-t_n))-I(u_n(-t_n))).
\end{multline}
Our task now is to pass to the limit $n\to\infty$ in this identity. To this end, we first note that due to the boundedness of the sequence $\xi_n$ and the fact that $t_n\to\infty$, the terms containing $u_n(-t_n)$ vanish in the limit. Next, due to already proved weak convergence in $E^1$, the passage to the limit in terms containing the functional $I$ is also immediate. Let us now pass to the limit in terms containing $L(\xi)$ and $N(\xi)$. The only non-trivial term in $N(\xi_{u_n})$ is $((u_n,\Nx)\Dt u_n,Au_n)$ (for other terms of $N$ the proved weak convergence is enough to pass to the limit). We write this terms as follows:
$$
((u_n,\Nx)\Dt u_n,Au_n)=((u,\Nx)\Dt u_n,Au_n)+((u_n-u,\Nx)\Dt u_n,Au_n).
$$
Since the sequence $\xi_{u_n}$ is bounded in $E^1$ and $u_n\to u$ {\it strongly} in the space $C_{loc}(R\times\bar\Omega)$, the second term tends to zero and we only need to study the first one. Let us introduce the quadratic form
$$
Q(\xi_{u_n}):=L(\xi_{u_n})-((u,\Nx)\Dt u_n,Au_n).
$$
Then, since the $C$-norm of $u(t)$ is uniformly bounded with respect to $\eb\to0$ and $t\in\R$, for sufficiently small $\eb>0$, we may fix $\alpha>0$ large enough and  $\beta>0$ small enough (both independent of $\eb\to0$) such that both quadratic forms $E^1_\eb(\xi_{u_n})$ and $Q(\xi_{u_n})$ will
be positive definite. Then, passage to the weak limit gives us that
\begin{multline}
\int_{-\infty}^0e^{\beta s}(L(\xi_{u}(s))+N(\xi_{u}(s))+\beta I(u(s)))\,ds\le\\\le \liminf_{n\to\infty}
\int_{-t_n}^0e^{\beta s}(L(\xi_{u_n}(s))+N(\xi_{u_n}(s))+\beta I(u_n(s)))\,ds
\end{multline}
and, therefore
\begin{multline}
\limsup_{n\to\infty}E^1_\eb(\xi_{u_n}(0))+\\+2\int_{-\infty}^0e^{\beta s}(L(\xi_{u}(s))+N(\xi_{u}(s))+\beta I(u(s)))\,ds\le 2I(u(0)).
\end{multline}
Comparing this with the energy identity for the limit solution $u(t)$, we infer
$$
\limsup_{n\to\infty}E^1_\eb(\xi_{u_n}(0))\le E^1(\xi_u(0))\le\liminf_{n\to\infty}E^1_\eb(\xi_{u_n}(0)),
$$
where the second inequality follows from the weak convergence~\eqref{weakxi} and the fact that $E^1_\eb(\xi)$ is positive definite. Therefore,
$$
\lim_{n\to\infty}E^1_\eb(\xi_{u_n}(0)=E^1_\eb(\xi_u(0))
$$
and, indeed, $\xi_{u_n}(0)\to\xi_u(0)$ strongly in $E^1$. Thus, the asymptotic compactness is verified and the proposition is proved.
\end{proof}
We now turn to study the singular limit $\eb\to0$. We start with the convergence of individual trajectories. Let $u_\eb(t)$ and $u_0(t)$ be the solutions of hyperbolic Navier-Stokes equation \eqref{1} and the limit classical Navier-Stokes system \eqref{4}. Let also
\begin{equation}\label{33.comp}
u_\eb\big|_{t=0}=u_0\big|_{t=0}
\end{equation}
and $w(t)=w_\eb(t):=u_\eb(t)-u_0(t)$. Then, as actually established in the proof of Theorem \ref{Th1.main},
\begin{equation}\label{33.dist}
\|w_\eb(t)\|_{H}^2\le C\eb\|\xi_{u_\eb}(0)\|^2_{\mathcal E^1_\eb}e^{Kt},
\end{equation}
where the constants $C$ and $K$ are independent of $\eb\to0$, see also \cite{Hach} for similar estimates.
However, this estimate is far from being optimal. In order to improve it, we add the first boundary layer term at $t=0$ and write
\begin{equation}\label{33.bl}
u_\eb(t)=u_0(t)+\eb(\Dt u_\eb(0)-\Dt u_0(0))(1-e^{-\frac t\eb})+\bar w_\eb(t).
\end{equation}
where the value of $\Dt u_0(0)$ is determined by $u_0(0)$ via equation \eqref{4}. Then, the following result holds.
\begin{proposition}\label{Prop33.4} Let the initial data $\xi_{u_\eb}(0)$ satisfy the assumptions of Theorem \ref{Th1.main}. Then the remainder $\bar w(t)=\bar w_\eb(t)$ satisfies the following estimate:
\begin{equation}\label{33.good}
\|\xi_{\bar w}(t)\|_{\mathcal E_\eb^{-1}}\le \eb Q(\|\xi_{u_\eb}(0)\|_{\mathcal E_\eb^{1}})e^{Kt},
\end{equation}
where the constant $K$ and the function $Q$ are independent of $\eb\to0$.
\end{proposition}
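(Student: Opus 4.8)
The plan is to derive an equation for the remainder $\bar w_\eb$ and bootstrap it exactly as in Step 1 of the proof of Theorem~\ref{Th1.main}, the only new point being that the boundary-layer corrector has been designed to kill the singular datum. Writing $u_\eb = u_0 + \eb\,\phi_\eb(1-e^{-t/\eb}) + \bar w_\eb$ with $\phi_\eb := \Dt u_\eb(0) - \Dt u_0(0)$ (a fixed vector field in $\mathcal H$, bounded in $\mathcal E_\eb^{1}$-terms by $Q(\|\xi_{u_\eb}(0)\|_{\mathcal E_\eb^1})$ since $\Dt u_0(0)$ is determined from $u_0(0)$ through \eqref{4}), I would substitute into \eqref{1.HNS}. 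The linear part $\eb\Dt^2 + \Dt + A$ applied to the corrector $\eb\phi_\eb(1-e^{-t/\eb})$ produces: from $\eb\Dt^2$ a term $-\frac1\eb\phi_\eb e^{-t/\eb}$, from $\Dt$ a term $+\phi_\eb e^{-t/\eb}$ (these are exactly the two terms of the ODE $\eb\Dt^2+\Dt$ that cancel), plus a term $\eb A\phi_\eb(1-e^{-t/\eb})$, which is $O(\eb)$ in $H^{-1}$ since $\phi_\eb\in\mathcal H$ so $A\phi_\eb$ is only $H^{-2}$ — here one should multiply by $A^{-1}$ as in Step~1, so $A^{-1}\cdot\eb A\phi_\eb = \eb\phi_\eb$ is perfectly controlled in $H$. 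The upshot is that $\bar w_\eb$ solves

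\begin{equation}
\eb\Dt^2\bar w_\eb + \Dt\bar w_\eb + A\bar w_\eb = -P\bigl((u_\eb,\Nx)u_\eb - (u_0,\Nx)u_0\bigr) - \eb\Dt^2 u_0 - \eb A\phi_\eb(1-e^{-t/\eb}),
\end{equation}

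with \emph{zero} initial data $\bar w_\eb(0)=0$, $\Dt\bar w_\eb(0)=0$ (the corrector is chosen precisely so that both the function and its time derivative match at $t=0$: $\eb\phi_\eb(1-e^{-t/\eb})|_{t=0}=0$ and $\frac{d}{dt}\bigl(\eb\phi_\eb(1-e^{-t/\eb})\bigr)|_{t=0}=\phi_\eb$).

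Next I would split the quadratic nonlinearity. Since $u_\eb - u_0 = \eb\phi_\eb(1-e^{-t/\eb}) + \bar w_\eb =: \psi_\eb$, the difference $(u_\eb,\Nx)u_\eb-(u_0,\Nx)u_0 = (\psi_\eb,\Nx)u_\eb + (u_0,\Nx)\psi_\eb$ is trilinear-type, with $\psi_\eb$ itself split into the explicitly-small corrector part (of size $\eb$ in every reasonable norm, using $\|A\phi_\eb\|_{H^{-2}}\lesssim\|\phi_\eb\|_{\mathcal H}$ and the $H^2$-bound on $u_\eb$ from Lemma~\ref{Lem1.1}) and the $\bar w_\eb$ part. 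Then I multiply the $\bar w_\eb$-equation by $A^{-1}(\Dt\bar w_\eb+\alpha\bar w_\eb)$ and run verbatim the energy computation of Step~1: the left side gives $\frac{d}{dt}E_\eb^{-1}(\bar w_\eb)+\beta E_\eb^{-1}(\bar w_\eb)+\beta\|\Dt\bar w_\eb\|_{H^{-1}}^2$ (modulo the equivalence \eqref{equivfunc}); the nonlinear terms linear in $\bar w_\eb$ are absorbed using the uniform $H^2$-control of $u_\eb$ and $u_0$ (Lemma~\ref{Lem1.1} and \eqref{1.dis0}), giving a factor $C_R\,E_\eb^{-1}(\bar w_\eb)$; the quadratic-in-$\bar w_\eb$ term $(\divv(\bar w_\eb\otimes\bar w_\eb),A^{-1}(\dots))$ is handled exactly as the first displayed estimate after \eqref{81}, producing $C\|\bar w_\eb\|_{H^2}E_\eb^{-1}(\bar w_\eb)^{1/2}\cdot E_\eb^{-1}(\bar w_\eb)^{1/2}$, and $\|\bar w_\eb\|_{H^2}\le\|u_\eb\|_{H^2}+\|u_0\|_{H^2}+\eb\|\phi_\eb\|_{H^2}\le Q_R$ again by Lemma~\ref{Lem1.1}; finally the inhomogeneous terms $\eb\Dt^2 u_0$ (controlled by $\eb^2\|\Dt^2 u_0\|_{H^{-1}}^2$, with $\Dt^2 u_0\in L^2_{loc}(V^*)$ from \eqref{1.der}) and $\eb A\phi_\eb(1-e^{-t/\eb})$ (controlled by $\eb^2\|\phi_\eb\|_{\mathcal H}^2\le\eb^2 Q(\|\xi_{u_\eb}(0)\|_{\mathcal E_\eb^1})^2$) contribute $\eb^2 Q(\|\xi_{u_\eb}(0)\|_{\mathcal E_\eb^1})^2$ after Young's inequality. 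Collecting,

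\begin{equation}
\frac{d}{dt}E_\eb^{-1}(\bar w_\eb) \le \bigl(C_R + Q_R\,E_\eb^{-1}(\bar w_\eb)^{1/2}\bigr)E_\eb^{-1}(\bar w_\eb) + \eb^2 Q(\|\xi_{u_\eb}(0)\|_{\mathcal E_\eb^1})^2, \qquad E_\eb^{-1}(\bar w_\eb(0))=0.
\end{equation}

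Gronwall's lemma then gives $E_\eb^{-1}(\bar w_\eb(t)) \le \eb^2 Q(\|\xi_{u_\eb}(0)\|_{\mathcal E_\eb^1})^2 e^{Kt}$ as long as the quantity stays small enough for the quadratic correction $Q_R E_\eb^{-1}(\bar w_\eb)^{1/2}$ to be dominated — which, exactly as in the proof of Lemma~\ref{Lem1.1}, is guaranteed for $\eb\le\eb_0(t,R)$ by a continuity/bootstrap argument, since the bound is $O(\eb^2 e^{Kt})$ and hence can be kept below any fixed threshold on a given interval. Taking square roots yields $\|\xi_{\bar w}(t)\|_{\mathcal E_\eb^{-1}} \le \eb\, Q(\|\xi_{u_\eb}(0)\|_{\mathcal E_\eb^1}) e^{Kt}$ (absorbing $K\to K/2$ or similar), which is \eqref{33.good}.

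The main obstacle, as in Theorem~\ref{Th1.main}, is the criticality of $(u,\Nx)u$ together with the $H^2$-norm of $\bar w_\eb$ appearing in the $E_\eb^{-1}$-estimate: one cannot close the estimate using weak norms alone. The resolution is the same — one does \emph{not} try to estimate $\|\bar w_\eb\|_{H^2}$ from the remainder equation, but instead imports the uniform $H^2$-bound on $u_\eb$ (and trivially on $u_0$) already established in Lemma~\ref{Lem1.1}, together with the fact that the corrector $\eb\phi_\eb(1-e^{-t/\eb})$ is $O(\eb)$ in $H^2$ because $\phi_\eb$ is a fixed field whose $H^2$-norm is controlled by $\|\xi_{u_\eb}(0)\|_{\mathcal E_\eb^1}$ (here $u_\eb(0)\in D(A)$, $\Dt u_0(0) = -Au_0(0)-P((u_0(0),\Nx)u_0(0))+g$ so $\Dt u_0(0)\in\mathcal H$; note the corrector itself is only as regular as $\mathcal H$, which is why it is multiplied by $\eb$ and why the $\eb A\phi_\eb$ term must be paired with $A^{-1}$). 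A secondary delicate point is making sure the trilinear term $((u_0,\Nx)(\eb\phi_\eb(1-e^{-t/\eb})),A^{-1}(\dots))$ and its companion are genuinely $O(\eb)$: this uses $\|u_0\|_{L^\infty}\le Q(\|g\|_{L^2})$ and $\|\phi_\eb\|_{\mathcal H}\le Q(\|\xi_{u_\eb}(0)\|_{\mathcal E_\eb^1})$ together with the smoothing $A^{-1}$, exactly the second-and-third-term estimate in Step~1.
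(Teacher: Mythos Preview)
Your strategy matches the paper's: derive the $\bar w$-equation with zero initial data and run the $E_\eb^{-1}$ energy estimate of Step~1. But there is a real gap in your treatment of the source term $\eb A\phi_\eb(1-e^{-t/\eb})$. After pairing with $A^{-1}\Dt\bar w$ you get $\eb(1-e^{-t/\eb})(\phi_\eb,\Dt\bar w)$, and you assert that Young's inequality yields $\eb^2\|\phi_\eb\|_{\mathcal H}^2$ plus an absorbable piece. This would require either $\|\phi_\eb\|_{H^1}$ (to pair against $\|\Dt\bar w\|_{H^{-1}}$) or control of $\|\Dt\bar w\|_{\mathcal H}$, and neither is available: since $\Dt u_0(0)=g-Au_0(0)-P(u_0(0),\Nx)u_0(0)\in\mathcal H$ and no better, $\phi_\eb$ lies only in $\mathcal H$; and $\|\Dt\bar w\|_{\mathcal H}$ is not part of the $E_\eb^{-1}$ functional. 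The paper fixes this by an integration by parts in time: with $w_l:=\phi_\eb(1-e^{-t/\eb})$ one writes $\eb(w_l,\Dt\bar w)=\tfrac{d}{dt}[\eb(w_l,\bar w)]-\eb(\Dt w_l,\bar w)$, absorbs $\eb(w_l,\bar w)$ into the energy, and splits the remainder as $\eb\|\Dt w_l\|_{\mathcal H}\|\bar w\|_{\mathcal H}\le C\|\Dt w_l\|_{\mathcal H}\|\bar w\|_{\mathcal H}^2+C\eb^2\|\Dt w_l\|_{\mathcal H}$. The first piece goes into the Gronwall coefficient and is harmless because $\int_0^\infty\|\Dt w_l\|_{\mathcal H}\,ds=\|\phi_\eb\|_{\mathcal H}$; the second integrates to $O(\eb^2)$, which is exactly what is needed for the rate $\eb$ in \eqref{33.good}. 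Without this trick you only get $\eb^{1/2}$.

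Two secondary points. Your further expansion producing a quadratic term $(\bar w,\Nx)\bar w$ is unnecessary: keeping $u_\eb$ as the coefficient in $(\bar w,\Nx)u_\eb$ and using its uniform $H^2$-bound from Theorem~\ref{Th1.main} avoids it entirely, and in fact your bound $\|\bar w\|_{H^2}\le\|u_\eb\|_{H^2}+\|u_0\|_{H^2}+\eb\|\phi_\eb\|_{H^2}$ fails since $\phi_\eb\notin H^2$. Finally, the target norm $\mathcal E_\eb^{-1}$ includes $\|\Dt\bar w\|_{H^{-2}}$ without a factor of $\eb$; the paper appends this via the boundary-layer ODE argument from the end of Theorem~\ref{Th1.main}, a step you omit.
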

\begin{proof}Indeed, the remainder $\bar w$ solves the equation
\begin{multline}
\eb\Dt^2\bar w+\Dt\bar w+A\bar w=-P[\divv(u_\eb\otimes \bar w)+\divv(\bar w\otimes u_0)]-\\-\eb P[ \divv(u_\eb\otimes w_l)+\divv(w_l\otimes u_0)-Aw_l+\Dt^2 u_0], \ \ \xi_{\bar w}\big|_{t=0}=0,
\end{multline}
where $w_l(t):=(\Dt u_\eb(0)-\Dt u_0(0))(1-e^{-\frac t\eb})$. Multiplying this equation by $\Dt A^{-1}\bar w$ and using the obvious estimate
\begin{multline}
|(P\divv(u_1\otimes u_2),\Dt A^{-1}\bar w)|\le\\\le C\|u_1\otimes u_2\|_{L^2}\|\Dt\bar w\|_{H^{-1}}
\le C\|u_1\|_{L^\infty}\|u_2\|_{L^2}\|\xi_w\|_{E^{-1}_\eb}^{1/2}
\end{multline}
together with the facts
that $u_\eb$ and $u_0$ are uniformly bounded in $C$ as well as $w_l$  is uniformly bounded in $H$, we end up with the following inequality
$$
\frac d{dt}\|\xi_{\bar w}\|^2_{E^{-1}_\eb}\le K\|\xi_{\bar w}\|^2_{E^{-1}_\eb}+\eb(w_l,\Dt \bar w)+\eb^2\|\Dt^2 u_0\|_{H^{-1}}
$$
for some positive constant $K$ depending on the initial data, but being independent of $\eb\to0$. So, we only need to estimate the second term in the right-hand side. To this end, we integrate by parts  and arrive at
\begin{multline}
\frac d{dt}\(\|\xi_{\bar w}\|^2_{E^{-1}_\eb}-\eb(w_l,\bar w)\)-K\(\|\xi_{\bar w}\|^2_{E^{-1}_\eb}-\eb(w_l,\bar w)\)\le\\\le \eb\|\Dt w_l\|_H\|\bar w\|_H+K\eb\|w_l\|_H\|\bar w\|_H+\eb^2\|\Dt^2 u_0\|^2_{H^{-1}}
\end{multline}
which in turn gives
\begin{multline}
\frac d{dt}\(\|\xi_{\bar w}\|^2_{E^{-1}_\eb}-\eb(w_l,\bar w)\)-(K+C\|\Dt w_l\|_H)\(\|\xi_{\bar w}\|^2_{E^{-1}_\eb}-\eb(w_l,\bar w)\)\le\\\le C\eb^2(\|\Dt u_0\|^2_{H^{-1}}+\|w_l\|^2_H+\|\Dt w_l\|_H).
\end{multline}
Integrating this inequality and using the facts that we have the uniform control of the
  $L^2(V^*)$-norm of $\Dt^2 u_0$ and the $L^1(H)$-norm of $\Dt w_l$, we get
\begin{equation}
\|\xi_{\bar w}(t)\|_{E^{-1}_\eb}\le \eb Q(\|\xi_{u_\eb}(0)\|_{\mathcal E^{1}_\eb})e^{Kt}.
\end{equation}
Thus, it only remains to estimate the $H^{-2}$-norm of $\Dt\bar w(t)$. This can be done exactly as at the end of the proof of Theorem \ref{Th1.main} using the fact that $\Dt^2u_0$ is bounded in $H^{-2}$ and the proposition is proved.
\end{proof}
Our next task is to compare the attractors $\mathcal A_\eb$ of hyperbolic Navier-Stokes system \eqref{1} with the global attractor of the limit equation \eqref{4}. To do this, we note that the solution operator of the limit equation is defined on a different space (since the initial data $\Dt u\big|_{t=0}$ is not required for solving the limit parabolic equation). To overcome this difficulty, we introduce following the standard scheme the phase space of the limit problem as follows:
\begin{equation}
\Phi_0:=\bigl\{(u_0,u_1)\in \mathcal{E}^1_0,\ u_1=-Au_0-P(u_0,\Nx)u_0+g\bigr\}
\end{equation}
where $\mathcal{E}^1_0=D(A)\times\mathcal{H}$,
and introduce the solution semigroup $S_0(t):\Phi_0\to\Phi_0$ by the natural expression
$$
S_0(t)(u_0(0),\Dt u_0(0)):=(u_0(t),\Dt u_0(t)),
$$
where $u_0(t)$ is the solution of the Navier-Stokes problem \eqref{4}.
\par
It is well-known that the semigroup $S_0(t)$ associated with the classical Navier-Stokes equation possesses a global attractor $\mathcal A_0$ in $\Phi_0$ which is related with the usual attractor $\bar{\mathcal A}_0$ of the Navier-Stokes problem in the phase space $H^2$ via the following expression:
\begin{equation}
\mathcal A_0=\bigl\{(u_0,u_1),\ u_0\in\bar{\mathcal A}_0,\ u_1=-Au_0-P(u_0,\Nx)u_0+g\bigr\}.
\end{equation}
The next proposition gives the strong convergence of the attractors $\mathcal A_\eb$ to the limit attractor $\mathcal A_0$.

\begin{proposition}\label{Prop33.lim} The family of attractors $\mathcal A_\eb$ is upper semi continuous at $\eb=0$ in the topology of the space $E^1_0$, i.e., for every neighbourhood $\mathcal O(\mathcal A_0)$ of the limit attractor $\mathcal A_0$, there exists $\eb_0=\eb_0(\mathcal O)>0$ such that
\begin{equation}\label{33.convatr}
\mathcal A_\eb\subset \mathcal O(\mathcal A_0)
\end{equation}
for all $\eb<\eb_0$.
\end{proposition}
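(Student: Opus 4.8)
The plan is to prove upper semicontinuity of the attractors $\mathcal A_\eb$ at $\eb=0$ by the standard argument: show that any sequence of points on the perturbed attractors $\mathcal A_{\eb_n}$, with $\eb_n\to0$, has a subsequence converging in the topology of $\mathcal E^1_0$ to a point of $\mathcal A_0$. Since $\mathcal A_0$ is compact, this suffices to conclude that for any neighbourhood $\mathcal O(\mathcal A_0)$ we eventually have $\mathcal A_\eb\subset\mathcal O(\mathcal A_0)$; otherwise one could extract a sequence $\xi_n\in\mathcal A_{\eb_n}\setminus\mathcal O(\mathcal A_0)$ contradicting the convergence. So the real content is the convergence statement.

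The key steps, in order, are as follows. First, by Proposition~\ref{Prop33.weak} each $\mathcal A_{\eb}$ is bounded in $\mathcal E^1_\eb$ uniformly in $\eb$, namely $\|\mathcal A_\eb\|_{\mathcal E^1_\eb}\le\bar R$, so in particular $\mathcal A_{\eb_n}$ is bounded in $\mathcal E^1_0=D(A)\times\mathcal H$; extract a weakly convergent subsequence $\xi_n\rightharpoonup\xi_\infty$ in $D(A)\times\mathcal H$. Second, use the representation $\mathcal A_\eb=\mathcal K_\eb|_{t=0}$: lift $\xi_n$ to a complete bounded trajectory $\xi_{u_n}(t)\in\mathcal K_{\eb_n}$ defined for all $t\in\R$, with $\|\xi_{u_n}(t)\|_{\mathcal E^1_{\eb_n}}\le\bar R$ uniformly. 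Third, pass to the limit in the equation along this trajectory: the uniform bounds give $u_n\to u_\infty$ in $C_{loc}(\R\times\bar\Omega)$ (Aubin--Lions, since $\Dt u_n$ is bounded in appropriate norms and $\eb_n\Dt^2 u_n$ is controlled), the singular term $\eb_n\Dt^2 u_n\to0$ in a weak sense, and one checks that $u_\infty$ is a complete bounded solution of the limit Navier-Stokes problem \eqref{4}; hence $\xi_{u_\infty}(t)\in\mathcal A_0$ for all $t$, in particular $\xi_\infty=\xi_{u_\infty}(0)\in\mathcal A_0$. Fourth, upgrade the weak convergence $\xi_n\rightharpoonup\xi_\infty$ to strong convergence in $\mathcal E^1_0$: write $\xi_n=(u_n(0),\Dt u_n(0))$ and $\xi_\infty=(u_\infty(0),\Dt u_\infty(0))$; for the first component, $u_n(0)\to u_\infty(0)$ strongly in $D(A)$ follows from the energy-method argument of Proposition~\ref{Prop33.statr} (with the caveat that now $\eb=\eb_n\to0$, but all the quadratic forms there were arranged to be positive definite uniformly in $\eb$); for the second component, use the boundary-layer decomposition \eqref{33.bl}, i.e. $u_n=u_{0,n}+\eb_n(\Dt u_n(0)-\Dt u_{0,n}(0))(1-e^{-t/\eb_n})+\bar w_n$ along the complete trajectory, together with Proposition~\ref{Prop33.4} giving $\|\xi_{\bar w_n}(t)\|_{\mathcal E^{-1}_{\eb_n}}\le\eb_n Q(\bar R)e^{Kt}$, to control $\Dt u_n(0)-\Dt u_{0,n}(0)$ and conclude $\Dt u_n(0)\to\Dt u_\infty(0)$ in $\mathcal H$.

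A technical point worth isolating: the boundary-layer estimate of Proposition~\ref{Prop33.4} is stated for trajectories issued from $t=0$ with matched data, whereas here one needs it along the complete bounded trajectory $\xi_{u_n}\in\mathcal K_{\eb_n}$. This is handled by applying that proposition on each interval $[-T,0]$: compare $u_n$ on $[-T,0]$ with the limit-equation solution $u_{0,n}$ having the same value at $t=-T$ (which is bounded in $D(A)$ by $\bar R$), obtaining $\|\xi_{\bar w_n}(0)\|_{\mathcal E^{-1}_{\eb_n}}\le\eb_n Q(\bar R)e^{KT}$, and then pass to the limit first in $n$ (for fixed $T$) and afterwards use that $T$ can be chosen so the residual attraction-type error is small; in fact, since on the attractor we also have a backward-in-time dissipative estimate, the dependence on $T$ stays under control and no delicate balancing of $T$ against $\eb_n$ is actually needed for the weak limit, only for the quantitative rate.

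The main obstacle I expect is the upgrade from weak to strong convergence of the time-derivative component $\Dt u_n(0)$ in $\mathcal H$: the singular scaling means $\eb_n\|\Dt u_n(0)\|_{H^1}^2\le\bar R$ gives only an $O(\eb_n^{-1/2})$ bound on $\|\Dt u_n(0)\|_{H^1}$, so one genuinely needs the boundary-layer structure to see that $\Dt u_n(0)$ nonetheless converges in $L^2$; this is precisely where Proposition~\ref{Prop33.4} is essential, and one must verify that its hypotheses (the uniform $C$-bound on $u_n$, $u_{0,n}$ and the uniform $H$-bound on the boundary-layer profile $w_l$) hold along complete trajectories on $\mathcal A_{\eb_n}$ — which they do, by \eqref{33.abound} and the dissipative estimate \eqref{1.dis0} for the limit equation. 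Everything else is a routine assembly of results already proved in the excerpt.
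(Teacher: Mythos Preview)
Your Steps~1--3 (extract bounded complete trajectories, pass to a weak limit, identify the limit as an element of $\mathcal K_0$) match the paper's proof exactly. The paper, however, handles the upgrade to strong convergence in $\mathcal E^1_0$ \emph{entirely} via the energy method of Proposition~\ref{Prop33.statr}; it never invokes Proposition~\ref{Prop33.4} or the boundary-layer decomposition \eqref{33.bl}. So your Step~4 is where you diverge, and this is where there is a real gap.

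The problem is the norm in Proposition~\ref{Prop33.4}: the estimate $\|\xi_{\bar w_n}\|_{\mathcal E^{-1}_{\eb_n}}\le \eb_n Q(\bar R)e^{KT}$ controls $\Dt\bar w_n$ only in $H^{-2}$, not in $\mathcal H$. Writing the decomposition from initial time $-T$ and evaluating the time derivative at $t=0$ gives
\[
\Dt u_n(0)-\Dt u_{0,n}(0)=(\Dt u_n(-T)-\Dt u_{0,n}(-T))e^{-T/\eb_n}+\Dt\bar w_n(0).
\]
The first term goes to zero in $\mathcal H$, but for the second you only know boundedness in $\mathcal H$ (from the uniform attractor bound) together with convergence to zero in $H^{-2}$. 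That combination yields only \emph{weak} convergence to zero in $\mathcal H$, not the strong convergence you need; there is no higher regularity of $\Dt u_n(0)$ available (the $H^1$ bound blows up like $\eb_n^{-1/2}$), so interpolation does not help. Thus Proposition~\ref{Prop33.4} cannot close the argument for the second component.

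What does work, and what the paper intends, is this. The energy method of Proposition~\ref{Prop33.statr}, run with $\eb=\eb_n\to0$ (the positive-definiteness of the forms $E^1_\eb$ and $Q$ there is uniform in small $\eb$), yields $u_n\to u_\infty$ strongly in $C_{loc}(\R,D(A))$. Once this is in hand, set $h_n:=-Au_n-P((u_n,\nabla)u_n)+g$, so that $h_n\to \Dt u_\infty$ in $C_{loc}(\R,\mathcal H)$. On the complete bounded trajectory the ODE $\eb_n\Dt^2u_n+\Dt u_n=h_n$ has the unique bounded solution
\[
\Dt u_n(0)=\frac1{\eb_n}\int_{-\infty}^0 e^{s/\eb_n}h_n(s)\,ds,
\]
and a standard approximate-identity argument (exactly the device used in \eqref{20}--\eqref{21} at the end of Theorem~\ref{Th1.main}) gives $\Dt u_n(0)\to\Dt u_\infty(0)$ in $\mathcal H$. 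This replaces your use of Proposition~\ref{Prop33.4} and avoids the norm mismatch.
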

\begin{proof} Indeed, according to the general theory, see e.g. \cite{BV}, it is sufficient to show that for every sequence $\eb_n\to0$ and every sequence $\xi_{u_n}\in\mathcal K_{\eb_n}$ there is a subsequence $\xi_{u_{n_k}}$ which is convergent in $C_{loc}(\R,E^1_0)$ to some $\xi_{u_0}\in\mathcal K_{0}$. Since according to the dissipative estimate, the sequence $\xi_{u_n}$ is uniformly bounded in $C_b(\R,E^1_0)$, we may assume without loss of generality that
\begin{equation}
\xi_{u_n}\rightharpoondown \xi_{u_0}
\end{equation}
in the space $L^\infty_{loc}(\R,E^1_0)$ and passing to the weak limit $n\to\infty$ in the equations for $u_n(t)$, we see that $\xi_{u_0}\in\mathcal K_0$. It remains to note that the strong convergence can be derived from the weak convergence using the energy method described in the proof of Proposition \ref{Prop33.statr}. Thus, the proposition is proved.
\end{proof}
\section{Generalizations and concluding remarks}\label{s4}

In this section, we briefly discuss possible generalizations of the obtained results and related topics. We start with the comments concerning the 3D case.

\subsection{Hyperbolic relaxation of the NS equations in 3D} Note that our estimates related with the closeness of the solutions of the relaxed problem and the initial parabolic one are based on the embedding $H^2(\Omega)\subset C(\Omega)$ and for this reason work in 3D case as well. The principal difference is related with the initial equation itself. Indeed, in contrast to the 2D case, we cannot solve globally the NS problem in 3D in the class of strong solutions, so we need to postulate it. This does not allow us to iterate estimate \eqref{21} and get the global existence of a solution for the perturbed system, so we may guarantee only the local result stated in the next proposition.

\begin{proposition}\label{Prop4.3D} Let $\Omega\subset\R^3$ be a bounded domain with sufficiently smooth boundary and let $u_0\in H^2$ be such that there exists a global strong solution $u_0(t)\in H^2$, $t\ge0$, of the classical Navier-Stokes problem \eqref{4}. Then, for every $T>0$ and $R>0$, there exists $\eb_0=\eb_0(T,R)>0$ such that for all $\eb\le\eb_0$ and all initial data $u_0'\in H^1$ such that $\|\{u_0,u_0'\}\|_{\mathcal E^1_\eb}\le R$ there exist a unique strong solution $u_\eb(t)$, $t\in[0,T]$ of problem \eqref{1} satisfying the initial condition
$$
\xi_{u_\eb}\big|_{t=0}=\{u_0,u_0')\}.
$$
Moreover, the analogue of Proposition \ref{Prop33.4} holds on the time interval $t\in[0,T]$.
\end{proposition}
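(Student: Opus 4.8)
The plan is to repeat the scheme of the proof of Theorem \ref{Th1.main} verbatim, but only on the \emph{fixed} finite interval $[0,T]$, and to check that all the estimates used there depend on the limit solution $u_0$ only through quantities that are finite on $[0,T]$ by hypothesis. First I would introduce, exactly as in Step 1 above, the difference $w=u_\eb-u_0$, where $u_0(t)$ is the postulated global strong solution of \eqref{4} with $u_0\big|_{t=0}$ prescribed; $w$ solves equation \eqref{7} with $v$ replaced by $u_0$ and with $w\big|_{t=0}=0$, $\Dt w\big|_{t=0}=u_0'-\Dt u_0(0)$. The only place where dimension entered in Steps 1--3 is through the embedding $H^2(\Omega)\subset C(\bar\Omega)$ and the interpolation inequalities $\|w\|_{L^\infty}^2\le C\|w\|_{L^2}\|w\|_{H^2}$, $\|w\|_{W^{1,4}}^4\le C\|w\|_{L^2}\|w\|_{H^2}^3$, $\|w\|_{L^4}^4\le C\|w\|_{H^2}\|w\|_{L^2}^3$; all three remain valid in $3$D for $\Omega\subset\R^3$ bounded with smooth boundary. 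Hence inequalities \eqref{10}, \eqref{11} and \eqref{15} carry over unchanged, with the constants $C_R$ and the functions $Q(\|u_0(t)\|_{H^2})$ now involving $\sup_{t\in[0,T]}\|u_0(t)\|_{H^2}$ and $\|\Dt^2 u_0\|_{L^2(0,T;V^*)}$, both finite by the assumed global regularity of $u_0$.

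Next I would invoke the \emph{finite-interval} part of the argument only. As already noted right after \eqref{18}, inequalities \eqref{10}, \eqref{11}, \eqref{15} (equivalently \eqref{18}) suffice, via the bootstrap/continuity argument of Lemma \ref{Lem1.1}, to produce a unique strong solution $u_\eb$ on $[0,T]$ once $\eb\le\eb_0(T,R)$ is small enough: one assumes the bounds \eqref{1.bound}, closes the Gronwall estimate for $E^1_\eb(u)$ on $[0,T]$, feeds the resulting control of $\|w\|_{H^2}$ back into \eqref{1.weakest} to get $E^{-1}_\eb(w(t))\le Q_R\,\eb\,e^{(C_R+Q_R)T}$, and chooses $\eb_0$ so small that $\bar Q_R\,\eb_0\,e^{\bar Q_R T}\le\beta/2$, which reinstates \eqref{1.bound} by continuity. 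Here I do \emph{not} attempt the iteration in $T$ of Step 4 — that was the step requiring the dissipative estimate of the limit equation and is exactly what fails in 3D — so the conclusion is only the local existence and uniqueness on $[0,T]$ stated in the proposition, with $\eb_0$ depending on $T$ and $R$.

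For the last assertion, that the analogue of Proposition \ref{Prop33.4} holds on $[0,T]$, I would rerun the proof of that proposition with $v$ (there $u_0$) the postulated 3D solution: write $u_\eb(t)=u_0(t)+\eb(\Dt u_\eb(0)-\Dt u_0(0))(1-e^{-t/\eb})+\bar w_\eb(t)$, derive the equation for $\bar w_\eb$, multiply by $\Dt A^{-1}\bar w$, and use the same bilinear estimate $|(P\divv(u_1\otimes u_2),\Dt A^{-1}\bar w)|\le C\|u_1\|_{L^\infty}\|u_2\|_{L^2}\|\xi_{\bar w}\|_{E^{-1}_\eb}^{1/2}$, which again needs only $H^2\subset C(\bar\Omega)$ and the uniform $C$-boundedness of $u_\eb$ and $u_0$ on $[0,T]$ (the former is supplied by the estimate \eqref{19} we have just established on $[0,T]$). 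Integrating the resulting Gronwall-type inequality over $[0,T]$, using that $\Dt^2 u_0$ is bounded in $L^2(0,T;V^*)$ and $\Dt w_l$ in $L^1(0,T;H)$, yields $\|\xi_{\bar w}(t)\|_{\mathcal E^{-1}_\eb}\le\eb\,Q(\|\xi_{u_\eb}(0)\|_{\mathcal E^1_\eb})e^{Kt}$ on $[0,T]$, with $K$ and $Q$ independent of $\eb$.

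The main obstacle is not analytic but structural: one must resist trying to globalize. Concretely, the term $Q(\|g\|_{L^2})\|u\|_{H^2}^2$ on the right of \eqref{15} is absorbed in Theorem \ref{Th1.main} only after Step 3's parabolic estimate for $Au$ \emph{and} the iteration in Step 4, and that iteration rests on the dissipative estimate \eqref{1.dis0} for the limit equation, which is unavailable in 3D. So the genuine content of the argument is checking that everything \emph{up to and including} Lemma \ref{Lem1.1} — and Proposition \ref{Prop33.4} — uses the limit solution only through norms that are finite on the prescribed interval $[0,T]$; once that bookkeeping is done, the 3D statement follows with essentially no new computation, the constants $\eb_0$, $Q$, $K$ now being allowed to depend on $T$, $R$ and on $\sup_{[0,T]}\|u_0(t)\|_{H^2}$.
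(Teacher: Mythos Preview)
Your approach is exactly the paper's: the authors write only that ``the proof actually repeats the one given above for the 2D case with non-essential minor corrections,'' and your sketch fills in those details and correctly identifies that the iteration-in-$T$ of Step~4 is the piece that must be dropped.

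One small slip worth flagging: the three interpolation inequalities you quote do \emph{not} carry over to $\R^3$ with the same exponents. In 3D the Gagliardo--Nirenberg inequalities read
\[
\|w\|_{L^\infty}^2\le C\|w\|_{L^2}^{1/2}\|w\|_{H^2}^{3/2},\qquad
\|w\|_{W^{1,4}}^4\le C\|w\|_{L^2}^{1/2}\|w\|_{H^2}^{7/2},\qquad
\|w\|_{L^4}^4\le C\|w\|_{L^2}^{5/2}\|w\|_{H^2}^{3/2},
\]
so the powers of $E^{-1}_\eb(w)$ and $E^1_\eb(u)$ appearing in the analogues of \eqref{10}, \eqref{13} and \eqref{15} change (e.g.\ $E^{-1}_\eb(w)^{1/4}E^1_\eb(u)^{7/4}$ in place of $E^{-1}_\eb(w)^{1/2}E^1_\eb(u)^{3/2}$). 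This does not affect the bootstrap of Lemma~\ref{Lem1.1} on a fixed $[0,T]$ --- the crucial feature, that the bad terms carry a positive power of $E^{-1}_\eb(w)$ and hence vanish with $\eb$, is preserved --- and these altered exponents are precisely the ``non-essential minor corrections'' the paper alludes to. With that caveat, your argument is complete.
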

The proof actually repeats the one given above for the 2D case with non-essential minor corrections, so we left it to the reader.
\begin{remark}
We also mention that there is an exceptional case where we expect  the global existence of solutions near $u_0(t)$ for the relaxed problem. Namely, this is the case where this limit solution is asymptotically stable (in a proper sense). Indeed, in this case we may establish the global existence of strong solutions for 3D Navier-Stokes equations in a {\it neighbourhood} of the solution $u_0(t)$ (using, say, the standard arguments related with the implicit function theorem). After that
it should be possible to iterate the analogue of estimate \eqref{21} and get the global existence of solutions of the relaxed equations in the neighbourhood of $u_0(t)$ if $\eb>0$ is small enough. We return to this problem somewhere else.
\end{remark}

\subsection{The case of 1D Burgers equation}\label{s4.1}
We now discuss a possible blow up of solutions of the hyperbolic
Navier-Stokes equations for $\eb>0$ and arbitrarily large initial data. We start with more simple hyperbolic relaxation of 1D Burgers equation
\begin{equation}\label{2.bure}
\eb\Dt^2 u+\Dt u+\partial_x(u^2)=\partial^2_x u+g,\ \ x\in[0,L],\ \ \xi_u\big|_{t=0}=\xi_0
\end{equation}
endowed by the Dirichlet boundary conditions. Then, on the one hand the arguments given in Section \ref{s3}, we see that the analogue of Theorem \ref{Th1.main} holds. Namely, for every $R>0$, there exists $\eb_0=\eb_0(R)$ such that a unique global solution $\xi_{u}(t)\in\mathcal E^1_\eb$ of problem \eqref{2.bure} exists for all $\eb\le\eb_0$ and the initial data $\xi_u(0)$ such that $\|\xi_u(0)\|_{\mathcal E^1_\eb}\le R$. Moreover, this solution satisfies the dissipative estimate \eqref{3}.
\par
On the other hand, this model is essentially simpler than the original Navier-Stokes problem and we are able to prove here that the solutions may blow up if the initial energy is large enough. To see this, we fix $\eb=1$ (which we may always assume due to scaling) and consider
the following hyperbolic equation on the whole line $x\in\R$:
\begin{equation}\label{2.bur}
\Dt^2 u+\Dt u+\partial_x(u^2)=\partial^2_x u,\ \ x\in\R,\ \ \xi_u\big|_{t=0}=\xi_0.
\end{equation}
Actually this equation possesses the finite propagation speed  property, so the boundary conditions are also not essential since the blowing up solution which we construct will be localized in space.
Then, the following result can be proved.

\begin{proposition} There exist smooth finitely supported initial data $\xi_u(0)$ such that the corresponding solution of \eqref{2.bur} blows up in finite time.
\end{proposition}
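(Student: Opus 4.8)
The plan is to show that for suitably large initial data the solution of the hyperbolic Burgers equation \eqref{2.bur} cannot exist globally, by constructing a weighted (or localized) functional of the solution that satisfies a differential inequality of the form $\ddot F \ge c F^{1+\delta}$ (or at least a Riccati-type inequality $\dot F \ge c F^2$ after reducing the order), which forces blow-up of $F$ in finite time. The natural choice, exploiting the finite propagation speed stated in the text, is to look at the solution inside a fixed characteristic cone and to integrate the equation against a fixed smooth test function $\varphi(x)\ge 0$ supported in the spatial support of the data; set $F(t):=\int_{\R} u(t,x)\varphi(x)\,dx$. Because the data are finitely supported, $u(t,\cdot)$ stays finitely supported (with support growing at unit speed), so all integrations by parts below have no boundary terms provided $\varphi$ is chosen with support large enough to contain the expanding support of $u$ on the (finite) time interval under consideration; alternatively one simply works on $[0,L]$ with Dirichlet conditions and a genuine eigenfunction-type weight.

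First I would integrate \eqref{2.bur} against $\varphi$. The term $\int \partial_x(u^2)\varphi = -\int u^2 \varphi' $ and the term $\int \partial_x^2 u\,\varphi = \int u\,\varphi''$, so
\begin{equation}
\ddot F(t) + \dot F(t) = \int u^2\,\varphi'' \,? \;+\;\text{sign-controlled terms},
\end{equation}
which by itself is not yet a closed inequality because of the wrong-sign and lower-order pieces. The standard fix (Kaplan/Levine-type concavity argument) is to choose $\varphi$ so that the linear operator acts nicely: if $\varphi$ is a positive eigenfunction of $-\partial_x^2$ with eigenvalue $\lambda$ (on $[0,L]$ with Dirichlet data $\varphi=\varphi_1>0$), then $\int u\,\varphi'' = -\lambda F$, while the nonlinear term, after an integration by parts and Cauchy--Schwarz against $\varphi$, can be bounded below by $c\,F^2$ on the set where $F$ is large, using $\left(\int u^2\varphi\right)\left(\int\varphi\right)\ge F^2$ together with a lower bound $\int u^2\varphi' \ge c\int u^2\varphi - C$ obtained by a sign/geometry choice of the weight (this is where one needs the weight to be, say, monotone increasing on its support so that $\varphi'\ge c\varphi$ on a dominant region). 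This yields a differential inequality
\begin{equation}
\ddot F + \dot F + \lambda F \ge c\,F^2 - C_0 ,
\end{equation}
and one checks that if the initial data are chosen so that $F(0)$ is large and $\dot F(0)\ge 0$ (achievable by taking $u_0'$ aligned with the weight, or simply $u_0'=0$ and $F(0)$ large enough that $c F(0)^2 - C_0 - \lambda F(0)>0$), then $F$ and $\dot F$ stay positive and increasing, the damping term $\dot F$ and the linear term $\lambda F$ are eventually dominated by $cF^2/2$, and a comparison with the ODE $\ddot G = \frac{c}{2}G^2$, $G(0)=F(0)$, $\dot G(0)=\dot F(0)\ge 0$ — whose solutions blow up in finite time — forces $F$, hence $u$ (which controls $F$ via the local energy bound on the cone), to cease to exist in finite time. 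To get genuinely smooth, compactly supported data one smooths $\varphi_1$ slightly, or simply works with the whole-line equation \eqref{2.bur}, takes $\xi_u(0)$ a large bump supported in $[0,L]$, and restricts $F$ to an integral against a fixed nonnegative $\varphi\in C_0^\infty$ that contains the light cone emanating from $[0,L]$ up to the (finite) predicted blow-up time — finite propagation speed guarantees no leakage and hence no boundary contributions, so the same inequality and comparison argument apply verbatim.

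The main obstacle I anticipate is the treatment of the wrong-sign linear and damping contributions relative to the nonlinear lower bound: unlike the classical semilinear concavity argument where $\int \partial_x^2 u\,\varphi$ has a favorable sign, here one must simultaneously beat the first-order damping $\dot F$ and the $\lambda F$ term, so the quadratic lower bound $cF^2$ must be made genuinely dominant, which is only possible once $F$ is large — hence the necessity of the largeness hypothesis on the initial data and the care in arranging $\dot F(0)\ge 0$ so that $F$ is monotone from the start. A secondary technical point is justifying that the solution on which we are computing is regular enough for these integrations by parts to be legitimate up to the putative blow-up time; this is handled by the local existence of strong solutions (the hyperbolic analogue of the well-posedness used for \eqref{2.bure}) together with a continuation criterion, so that "the solution ceases to exist" is meaningful and is exactly what the divergence of $F$ certifies.
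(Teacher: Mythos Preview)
Your strategy has the right architecture --- weighted average $F$, differential inequality, comparison --- but the step in which you claim a lower bound of the form $\int u^2\varphi'\,dx \ge c\int u^2\varphi\,dx - C$ is where the argument breaks. For any nonnegative weight $\varphi$ that is either compactly supported or a Dirichlet eigenfunction on $[0,L]$, the derivative $\varphi'$ necessarily changes sign: a compactly supported $\varphi\ge0$ has $\varphi'\le0$ near the right end of its support, and $\sin(\pi x/L)$ is decreasing on $[L/2,L]$. Consequently there is no uniform inequality $\varphi'\ge c\varphi$, and your appeal to monotonicity ``on a dominant region'' would require a priori knowledge of where the mass of $u^2$ concentrates, which you do not have. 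Without that pointwise bound on the weight, the inequality $\int u^2\varphi'\ge cF^2-C_0$ is not established and the quadratic right-hand side of your ODE never materializes. The diffusion term is a second, lesser problem: with a Dirichlet eigenfunction it contributes $-\lambda F$, so even the linear part of your inequality works against you.

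The paper's proof (following Sideris) circumvents exactly this obstruction by choosing a weight outside the class you consider: it multiplies by the exponential $e^{-x}$, which is simultaneously an eigenfunction of $\partial_x^2$ with eigenvalue $+1$ and of $\partial_x$, so that both the diffusion term and the nonlinear term come out with the correct sign automatically. One obtains $y''+y'=y+\int e^{-x}u^2\,dx$, and then Jensen's inequality on the bounded support $[-R-t,R+t]$ gives $\int e^{-x}u^2\,dx\ge C(T)\,y^2$ on $[0,T]$. The finite propagation speed is used not merely to avoid boundary terms in integrations by parts, but crucially to make the \emph{unbounded} weight $e^{-x}$ admissible: this is precisely what a compactly supported or $L^2$-eigenfunction weight cannot do, and is the missing idea in your proposal.
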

\begin{proof} Also this result is well-known (see e.g., \cite{sider}), for the convenience of the reader we reproduce key points of the proof here. We start with the finite propagation property which is the key technical tool for verifying the blow up.
\par
 Namely, if the support of the initial data $\xi_u(0)$ satisfies
 $$
 \operatorname{supp}\xi_u(0)\subset [-R,R],
 $$
  then
\begin{equation}\label{2.sup}
\operatorname{supp}\xi_u(t)\subset [-R-t,R+t].
\end{equation}
To verify this, following~\cite{Evans, Hach} (see also references therein),
we consider the cone $K$ in the    $(x,t)$-plane with base $-R\le x\le R$ and
vertex $t=R$, $x=0$. Let, for $0\le t\le R$,
\begin{samepage}
$$
e(t):=\int_{-(R-t)}^{R-t}\left((\partial_t u(t,x))^2+(\partial_x u(t,x))^2\right)dx
$$
 be the energy at the section of the cone $K$ at time $t$. Then
\end{samepage}

$$
\aligned
&\frac d{dt}e(t)=2\int_{-(R-t)}^{R-t}\left(\partial_t u\partial_{tt} u+\partial_x u\partial_{xt} u\right)dx-\\
&\qquad -[((\partial_t u)^2+(\partial_x u)^2)\vert_{(t,R-t)}+((\partial_t u)^2+(\partial_x u)^2)\vert_{(t,-(R-t))}]=\\
&2\int_{-(R-t)}^{R-t}\left(\partial_t u(\partial_{tt} u- \partial_{xx}u) \right)dx-\\
&-[((\partial_t u)^2+2\partial_t u\partial_x u +u(\partial_x u)^2)]\vert_{(t,R-t)}-\\
&\qquad\qquad-[((\partial_t u)^2+2\partial_t u\partial_x u+(\partial_x u)^2)]\vert_{(t,-(R-t))}\le\\
&2\int_{-(R-t)}^{R-t}\left(\partial_t u(\partial_{tt} u- \partial_{xx}u) \right)dx=
2\int_{-(R-t)}^{R-t}\left(\partial_t u(-\partial_{t} u- 2u\partial_{x}u) \right)dx\le\\
&4\int_{-(R-t)}^{R-t}u\partial_t u \partial_{x}u dx\le 2\|u\|_{L^\infty}e(t).
\endaligned
$$
Now, if $\operatorname{supp}\xi_u(0)\cap [-R,R]=\emptyset$, then $e(0)=0$ and, hence,
$e(t)=0$ for $0\le t\le R$. Therefore the solution vanishes in the cone $K$,
which is equivalent to~\eqref{2.sup}.
\par
We are now ready to verify the blow up. Following \cite{sider} (see also \cite{Hach}),
 multiply equation \eqref{2.bur} by $e^{-x}$ and integrate over $x\in\R$. Then, denoting
  $$
  y(t):=\int_Re^{-x}u(t,x)\,dx,
  $$
  we get
\begin{equation}\label{2.bl1}
y''(t)+y'(t)=y(t)+\int_Re^{-x}u^2(t,x)\,dx.
\end{equation}
All of the terms in this equation will be finite if we start from the initial data $\xi_u(0)$ with finite support due to the property \eqref{2.sup}. Let us fix an arbitrary $T>0$ and assume that $\operatorname{supp}\xi_u(0)\subset[-1,1]$. Then, due to \eqref{2.sup} and Jensen inequality, we can estimate the nonlinear term in \eqref{2.bl1}
\begin{equation}
\int_Re^{-x}u^2(t,x)\,dt\ge e^{-T-1}\int_\R(e^{-x}u(t,x))^2\,dx\ge e^{-T-1}(2(T+1)^{-1}y^2(t)
\end{equation}
which gives
\begin{equation}
y''(t)+y'(t)\ge y(t)+e^{-T-1}(2(T+1)^{-1}y^2(t),\ \ t\in[0,T]
\end{equation}
which guarantees the blow up of solutions of \eqref{2.bur}
if the localized initial data is large enough, see \cite{sider} for details. Thus, the proposition is proved.
\end{proof}

\subsection{Finite propagation approximation of the Navier-Stokes problem}
  We emphasize once more that the finite prorogation speed property is
  crucial for this method. Since for the initial hyperbolic Navier-Stokes
  system this property clearly fails due to the presence of the non-local
  pressure term, the method is not applicable to these equation and one
  should find an alternative way to establish the finite time blow up. However, if we modify slightly the approximation scheme for the original Navier-Stokes problem, we may get the finite propagation speed property. Namely, let us consider the following problem:
  \begin{equation}\label{2.alphaeb}
  \eb\Dt^2u+\Dt u+\divv(u\otimes u)=\Dx u+\frac1\alpha\Nx\divv u+g,\ \ u\big|_{\partial\Omega}=0,
  \end{equation}
where $\alpha>0$ is one more small parameter. Then, on the one hand, as not difficult to see, the solutions of \eqref{2.alphaeb} converge as $\alpha\to0$ to the corresponding solutions of the hyperbolic Navier-Stokes problem \eqref{1}. On the other hand, equation \eqref{2.alphaeb} possesses the finite propagation speed property, see \cite{Hach} for the details. When the finite propagation speed property is established, the blow up of smooth solutions for problem \eqref{2.alphaeb} can be verified exactly as for the case of Burgers equation, see \cite{Sch} for the details.

\subsection{Connection with viscoelastic fluids} Note that the existence of blow up solutions discussed is related mainly with the fact  that the hyperbolic relaxation \eqref{1} of the Navier-Stokes equations does not possess a reasonable energy functional for $\eb>0$. By this reason, the model \eqref{1} looks a bit non-physical. Fortunately, this drawback can be easily corrected by adding an extra small term to \eqref{1} which does not destroy the hyperbolic structure of the equations and the estimates obtained above, but restores the energy identity. Indeed, let us consider the following particular case of the so-called Jeffrey model for viscoelastic fluids:
\begin{equation}\label{2.old}
\begin{cases}
\Dt u+(u,\Nx)u+\Nx p=\divv\sigma+g,\ \ \divv u=0,\\ \eb\Dt\sigma+\sigma=\gamma,
\end{cases}
\end{equation}
where $\gamma:=\frac12\(\Nx u+\Nx^*u\)$ is a strain rate tensor, see \cite{const,GGP} for more details. Then, integrating the second equation in time, we may write \eqref{2.old} as an Euler equation with memory term
$$
\Dt u+(u,\Nx)u+\Nx p=\int_{-\infty}^t\beta(t-s)\Dx u(s)\,ds+g,\ \ \beta(s)=\frac1\eb e^{-\frac{s}\eb}.
$$
Alternatively, excluding $\sigma$ from the first equation by differentiating it in time and using the second one gives
\begin{equation}\label{2.J}
\eb\Dt^2u+\eb\Dt[(u,\Nx)u]+\Dt u+(u,\Nx)u+\Nx p=\Dx u+g
\end{equation}
which coincides with \eqref{1} up to the desired extra term $\eb\Dt[(u,\Nx)u]$. This extra term allows to restore the energy identity. Namely, multiplying the first equation of \eqref{2.old} by $u$ and integrating over $x$, we get the energy identity of the form
\begin{equation}\label{2.energy}
\frac12\frac d{dt}\(\|u\|^2_{L^2}+\eb\|\divv\sigma\|^2_{L^2}\)+\|\divv \sigma\|^2_{L^2}=(u,g).
\end{equation}
This identity guarantees at least the global existence of weak solutions and destroys the blow up mechanism described above. We return to this problem somewhere else.

\end{document}